\DeclareMathOperator{\capa}{cap}
\newcommand{\bbN}{{\mathbb{N}}}
\newcommand{\bbR}{{\mathbb{R}}}
\newcommand{\bbZ}{{\mathbb{Z}}}
\newcommand{\bbC}{{\mathbb{C}}}
\newcommand{\bbQ}{{\mathbb{Q}}}
\newcommand{\cB}{{\mathcal{B}}}
\newcommand{\cE}{{\mathcal{E}}}
\newcommand{\cJ}{{\mathcal{J}}}
\newcommand{\cK}{{\mathcal{K}}}
\newcommand{\cM}{{\mathcal{M}}}
\newcommand{\cS}{{\mathcal{S}}}
\newcommand{\cT}{{\mathcal{T}}}
\newcommand{\e}{{\varepsilon}}
\renewcommand{\Re}{\operatorname{Re}}
\theoremstyle{plain}
\newtheorem{thm}{Theorem}[section]
\newtheorem*{thm*}{Theorem}
\newtheorem{lem}[thm]{Lemma}
\newtheorem{prop}[thm]{Proposition}
\theoremstyle{definition}
\newtheorem*{rem}{Remark}
\newtheorem*{qstn}{Question}
\newtheorem*{ack}{Acknowledgments}
\numberwithin{equation}{section}
\numberwithin{thm}{section}
\numberwithin{defn}{section}
\begin{document}

\author[T.\ VandenBoom]{Tom VandenBoom}
\address{Department of Mathematics, Rice University, Houston, TX~77005, USA}
\email{tv4@rice.edu}

\thanks{T.V.\ was supported in part by NSF grants DMS--1361625 and DMS--1148609.}

\title{Reflectionless Discrete Schr\"odinger Operators Are Spectrally Atypical}

\begin{abstract}
We prove that, if an isospectral torus contains a discrete Schr\"odinger operator with nonconstant potential, the shift dynamics on that torus cannot be minimal.  Consequently, we specify a generic sense in which finite unions of nondegenerate closed intervals having capacity one are not the spectrum of any reflectionless discrete Schr\"odinger operator.  We also show that the only reflectionless discrete Schr\"odinger operators having zero, one, or two spectral gaps are periodic.
\end{abstract}

\maketitle

\section{Introduction and Main Results}

A discrete Schr\"odinger operator (DSO) is a self-adjoint linear operator $H$ on the Hilbert space of square-summable sequences $\ell^2(\bbZ)$ which acts entrywise via
\begin{align}
\label{eq:dso}
(Hu)_n = u_{n+1} + u_{n-1} + V(n)u_n, \;\; u \in \ell^2(\bbZ),
\end{align}
where $V$ is a bounded real potential funtion $V : \bbZ \to \bbR$.  A DSO $H_V$ is called almost-periodic if its potential function $V : \bbZ \to \bbR$ is almost-periodic; that is, if every sequence of translates  $V_k = V(\cdot + n_k)$ has a uniformly convergent subsequence.  A prominent example of an almost-periodic DSO is the Almost-Mathieu Operator given by $H_V$ with $V_{\lambda,\theta,\alpha}(n) = 2\lambda\cos(2\pi(\theta + n\alpha))$ for $\alpha \in [0,1]\setminus \bbQ$.  The DSO is most naturally identified as a discretized model of the differential Schr\"odinger operator, $L = -\Delta + V$.  The Schr\"odinger operator and DSO tend to share many of the same spectral characteristics, and as such the question of identifying the spectral characteristics of a DSO with a fixed almost-periodic potential $V$ is thoroughly studied and reasonably well-understood.

On the other hand, one can likewise ask the dual question: if one fixes certain spectral characteristics, what, if any, bounded self-adjoint operators demonstrate them? In this context, the Jacobi operator $J = J(a,b)$
\begin{align}
\label{eq:jacobi}
(Ju)_n = a_{n}u_{n-1} + b_nu_n + a_{n+1}u_{n+1}, \;\; u \in \ell^2(\bbZ)
\end{align}
arises in a very natural way.  Namely, when one fixes a compactly supported probability measure $\mu$ on $\bbR$, multiplication by the independent parameter $x$ with respect to the basis of orthogonal polynomials in $L^2(d\mu)$ is unitarily equivalent to operation by a half-line Jacobi operator $J$ on $\ell^2(\bbN)$.  Under the assumption of absolute continuity and almost-periodicity, a whole-line Jacobi operator can be completely reconstructed from any of its half-line restrictions.  In fact, for a finite union of nondegenerate disjoint closed intervals $E \subset \bbR$ (henceforth, a ``finite-gap compact"), the set $\cJ(E)$ of all absolutely continuous, almost-periodic, whole-line Jacobi operators with spectrum $E$ is naturally homeomorphic to a finite-dimensional torus \cite{GHMT08, SY97}.  In this case we call $\cJ(E)$ the isospectral torus for $E$. 

For finite-gap compacts $E$, certain potential-theoretic properties are directly related to spectral properties of elements of $\cJ(E)$.  If $\cM_1(E)$ denotes the set of Borel probability measures supported on $E$, define the logarithmic capacity of $E$ by
\begin{align*}
\capa(E) := \sup_{\mu \in \cM_1(E)} \exp\left(-\int_{E}\int_{E} \log\left(|x-y|^{-1}\right) d\mu(x)d\mu(y) \right).
\end{align*}
The logarithmic capacity of a finite-gap compact $E$ can be determined as the limit of the geometric means of the off-diagonal sequences of Jacobi operators in the isospectral torus \cite{SIM11, SY97}: 
\begin{align}
\label{eq:captoa}
\capa(E) = \lim_{n \to \infty} (a_1a_2\cdots a_n)^{1/n}, \;\; J(a,b) \in \cJ(E).
\end{align}
We denote by $\cK_\alpha^n$ the set of $n$-gap compact subsets of $\bbR$ of capacity $\alpha$:
\begin{align*}
\cK_\alpha^n := \{E \subset \bbR: E \;\;n\text{-gap compact},\; \capa(E) = \alpha\}.
\end{align*}
Similarly, we denote $\cK^n$ the set of all $n$-gap real compacts, and by $\cK_\alpha$ the set of finite-gap compact subsets of $\bbR$ of capacity $\alpha$:
\begin{align*}
\cK^n = \bigcup_{\alpha > 0} \cK^n_\alpha, \;\;\;
\cK_\alpha = \bigcup_{n=1}^\infty \cK^n_\alpha.
\end{align*}
Note that, by \eqref{eq:captoa}, if $E = \sigma(H_V)$ for some DSO $H_V$, then $E \notin \cK_\alpha$ for any $\alpha \neq 1$.

Comparing equations \eqref{eq:dso}, \eqref{eq:jacobi}, and \eqref{eq:captoa} immediately reveals that DSOs are a special class of Jacobi operator with spectrum $E$ of unit capacity $\capa(E) = 1$.  Thus, an absolutely continuous, almost-periodic DSO with finite-gap spectrum $E$ is a member of the isospectral torus $\cJ(E)$.  However, the isospectral torus is defined in terms of the much broader class of Jacobi operators.  In this context, the following question is quite natural: 
\begin{qstn}
Consider $E \in \cK_1$.  Does the isospectral torus $\cJ(E)$ contain a DSO?
\end{qstn}
Our titular result says that the answer to this question is generically ``no":

\begin{thm}
\label{thm:specatyp}
The set of finite-gap compacts $E \in \cK_1$ for which there exists a DSO $H \in \cJ(E)$ is meager and has measure zero.
\end{thm}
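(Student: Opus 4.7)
The plan is to combine the paper's main structural theorem—that any DSO in an isospectral torus with nonconstant potential forces the shift to be non-minimal—with real-analytic dependence of the shift's rotation vector on the spectrum. This reduces the theorem to a countable-union-of-analytic-subvarieties argument.

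\textbf{Reduction.} Fix $n \geq 1$ and $E \in \cK_1^n$. The torus $\cJ(E) \cong \bbT^n$ carries the shift, which acts as translation by a rotation vector $\omega(E) = (\omega_1, \ldots, \omega_n) \in \bbT^n$; the components are (up to normalization) harmonic measures of the gaps of $E$ at infinity, and hence depend real-analytically on $E$ in the natural $(2n+1)$-dimensional real-analytic structure of $\cK_1^n$. If $H_V \in \cJ(E)$ is a DSO then $V$ must be nonconstant, since a constant potential yields a single interval $\sigma(H_V) = [V-2, V+2]$, contradicting $n \geq 1$. The structural theorem then forces the shift to be non-minimal: there exist $k = (k_1, \ldots, k_n) \in \bbZ^n \setminus \{0\}$ and $m \in \bbZ$ with
\begin{align*}
k_1 \omega_1(E) + \cdots + k_n \omega_n(E) = m.
\end{align*}

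\textbf{Analytic subvarieties and transversality.} For each such $(k,m)$ the set $\cB_{k,m}^n := \{E \in \cK_1^n : k_1 \omega_1(E) + \cdots + k_n \omega_n(E) = m\}$ is the zero set of a real-analytic function on $\cK_1^n$, so provided this function does not vanish identically, $\cB_{k,m}^n$ is a proper real-analytic subvariety—hence nowhere dense and of Lebesgue measure zero. The key step is the non-degeneracy: no fixed nontrivial integer linear combination of $\omega_1, \ldots, \omega_n$ is identically integer-valued across $\cK_1^n$. I would argue this via density of periodic spectra: spectra of period-$p$ Jacobi operators correspond to rotation vectors $\omega(E) \in (p^{-1}\bbZ)^n$, and such spectra are dense in each $\cK_1^n$ as $p$ varies (a classical finite-gap approximation fact). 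Since every open subset of $\cK_1^n$ then contains spectra realizing many distinct rational rotation vectors, no fixed integer linear relation among the $\omega_i$ can hold throughout $\cK_1^n$.

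\textbf{Assembly and main obstacle.} The bad set in $\cK_1^n$ is contained in the countable union $\bigcup_{(k,m)} \cB_{k,m}^n$, itself meager and Lebesgue-null by Baire category and countable additivity. The remaining stratum $\cK_1^0$ consists of translates of $[-2,2]$, a $1$-parameter family, hence lower-dimensional and meager and null in $\cK_1 = \bigsqcup_n \cK_1^n$ under its natural stratified topology and measure. Combining these gives the theorem. The sole nontrivial step is the transversality claim above; beyond the periodic-density route, one could alternatively compute the differential of $\omega$ via explicit integral formulas for harmonic measure on slit domains, verifying submersivity of $E \mapsto \omega(E)$. The remaining ingredients—analyticity, the stratification, and the countable-union bookkeeping—are routine.
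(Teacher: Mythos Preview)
Your proposal is correct and follows essentially the same route as the paper: reduce via the structural theorem to rational dependence of the harmonic frequencies, then show this locus is meager and null in each $\cK_1^n$ by real-analyticity of $E \mapsto \omega(E)$ together with a non-degeneracy input. The paper packages the latter as a standalone proposition proved via Sard's theorem plus the fact (from \cite{EY12,TOT09}) that the image of $\omega$ has positive measure, which forces $\omega$ to be a submersion almost everywhere; your alternative of verifying submersivity directly from the integral formulas is exactly in this spirit.

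One small caution: your primary non-degeneracy argument via density of periodic spectra is a bit circular as written. Density of periodic spectra in $\cK_1^n$ amounts to density of $E$ with rational $\omega(E)$, which by itself does not preclude $\omega$ being locally constant---in that case every nearby spectrum would be periodic with the \emph{same} rotation vector, and no contradiction arises. You really do need some form of the submersivity or non-constancy of $\omega$ on $\cK_1^n$, which is precisely the content you defer to your second alternative. Also, note that in the paper's conventions $\cK_1 = \bigcup_{n \geq 1} \cK_1^n$ already excludes the interval case $n=0$, so your separate handling of $\cK_1^0$ is unnecessary (and the argument that a clopen stratum is ``lower-dimensional hence meager'' in a disjoint-union topology would not work anyway).
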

By a meager set, we mean a countable union of nowhere dense sets in the relevant topology.  Here, the topology and measure on $\cK_1$ is induced by a topology on each $\cK_\alpha^n$; namely, any $E \in \cK^n$ can be identified uniquely by its ordered $2(n+1)$ distinct gap edges.  The topology and measure are then induced by inclusion of the corresponding vector in $\bbR^{2(n+1)}$.  We clarify precisely what we mean here in Section 2.
\begin{rem}
The restriction to $E \in \cK_1$ is necessary here to make the theorem statement nontrivial; indeed, we will see later that $\cK^n_1$ is codimension $1$ inside $\cK^n$.  However, if we were to extend our definition of DSO to include any Jacobi operator with constant off-diagonal sequence $a$, all of our theorems would remain true without any restriction on the capacity of $E$.

We also would be remiss not to mention the recent work \cite{HUR17}, wherein the methods of canonical systems are used to prove a result on the sparsity of $m$-functions of DSOs amongst those for Jacobi operators.
\end{rem}

On the other hand, it is known in general that, if $V(\cdot) = V(\cdot+p)$ is periodic for some $p \in \bbZ$, the associated DSO $H_V$ is purely absolutely continuous with spectrum consisting of a union of at most $p$ closed intervals.  Our second result says that, at least when the number of gaps in $E$ is small, a sort of converse holds:
\begin{thm}
\label{thm:lowgapper1}
Suppose there exists an almost-periodic DSO $H$ with $\sigma(H) = E$ with $n= 0, 1,$ or $2$ open spectral gaps such that the essential support of the a.c. spectrum $\Sigma_{ac}(H)$ agrees with $E$ up to sets of Lebesgue measure zero; that is, $|E \setminus \Sigma_{ac}(H)| = 0$.  Then $H$ is periodic.
\end{thm}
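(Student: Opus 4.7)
The plan is to exploit the paper's principal dynamical result---that the shift action on an isospectral torus cannot be minimal whenever the torus contains a DSO with nonconstant potential---combined with the restricted classification of closed subgroups of $\bbT^n$ for $n \le 2$.

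\emph{Reduction and easy cases.} The almost-periodicity of $H$ together with $|E \setminus \Sigma_{ac}(H)| = 0$ and $\sigma(H) = E$ places $H$ inside the isospectral torus $\cJ(E) \cong \bbT^n$, where $n \in \{0,1,2\}$ is the number of open gaps of $E$. The shift $S$ acts on $\cJ(E)$ as translation by a vector $\omega = (\omega_1,\dots,\omega_n) \in \bbT^n$, and periodicity of $H$ amounts to $\omega$ having finite order. For $n=0$ the torus is a point, $V$ is constant, and $H$ has period $1$. For $n=1$, $V$ cannot be constant (else $E$ would be a single band), so the principal dynamical theorem yields non-minimality of $S$ on $\bbT^1$; since every non-minimal rotation of $\bbT^1$ is rational, $H$ is periodic.

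\emph{Case $n = 2$.} Since $V$ is nonconstant, the principal dynamical theorem again yields non-minimality of $S$ on $\bbT^2$, so the orbit closure $T := \overline{\{S^k H : k \in \bbZ\}}$ is a proper closed $S$-invariant subset of $\cJ(E)$. By the classification of closed subgroups of $\bbT^2$, either $T$ is finite---in which case $H$ is periodic and we are done---or $T$ is a one-dimensional subtorus on which $S$ restricts to an irrational, hence minimal, rotation. In the latter case every element of $T$ is a DSO: shifts of a DSO are DSOs, and the condition $a_n \equiv 1$ is closed in the topology of $\cJ(E)$. Thus $T$ would be an entire circle of DSOs sharing the spectrum $E$.

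\emph{Main obstacle and proposed strategy.} Ruling out the one-dimensional hull is the heart of the proof and the main obstacle: the principal dynamical theorem is formulated for the full torus $\cJ(E)$, whereas $T$ is only a proper subtorus. My first attack would be to revisit the proof of the dynamical theorem and extract a version requiring only a minimal, closed, $S$-invariant set of DSOs with a common finite-gap spectrum $E$ of capacity $1$; the argument presumably hinges on the common spectrum, the constraint $\capa(E) = 1$, and trace-type identities enforcing $a_n \equiv 1$, all of which are available on $T$. If such a generalization goes through, applying it to $T$ forces constant potentials, contradicting the fact that $T$ has positive dimension. As a backup, I would parametrize $\cJ(E) \cong \bbT^2$ explicitly via a Dirichlet divisor on the hyperelliptic Riemann surface $\cR_E$ associated to $E$, express $a_0 - 1$ as a ratio of Riemann theta-functions in Abel--Jacobi coordinates, and argue that this real-analytic function cannot vanish identically along a one-dimensional subtorus of irrational slope---this would rely on the transcendence of theta-quotients restricted to such subtori and would be the computationally demanding step.
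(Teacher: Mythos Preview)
Your reduction to $H \in \cJ(E)$ and the treatment of $n=0,1$ are fine and match the paper. The issue is the $n=2$ case, where you correctly isolate the obstacle---ruling out a one-dimensional minimal subtorus $T$ of DSOs---but neither of your proposed attacks is likely to close it as stated.

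Regarding your first strategy: the mechanism behind the paper's non-minimality theorem is not a trace identity but the Toda lattice. One runs the flow $\partial_t a_n = a_n(b_{n+1}-b_n)$, $\partial_t b_n = 2(a_n^2 - a_{n-1}^2)$ and uses that this flow preserves the \emph{full} torus $\cJ(E)$; if every point of $\cJ(E)$ is a DSO then $a_n(t)\equiv 1$, hence $\partial_t a_n \equiv 0$, forcing $b_{n+1}=b_n$. On a proper subtorus $T$ this breaks down: the Toda flow is linearized on $\cJ(E)$ as translation by a fixed vector $\xi$, and there is no reason for $\xi$ to be tangent to $T$. In fact, since the Toda vectors for $P(z)=z$ and $P(z)=z^2$ are linearly independent on $\bbT^2$, at least one flow is transverse to $T$, and the orbit immediately leaves the DSO locus (indeed $\partial_t a_n|_{t=0} = b_{n+1}-b_n \neq 0$ for nonconstant $V$). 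So you cannot conclude $a_n(t)\equiv 1$ along the flow, and the argument stalls. Your second strategy is too vague to assess; the vanishing of a nontrivial theta-quotient along an irrational-slope subtorus does not follow from general principles.

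The paper's route for $n=2$ avoids the subtorus dichotomy entirely. It uses that any $H\in\cJ(E)$ with $E$ two-gap is \emph{stationary} for a monic cubic $P$; writing out $[P(H)^+-P(H)^-,H]=0$ with $a_n\equiv 1$ yields
\[
(b_{n+1}-b_{n-1})(b_{n+1}+b_n+b_{n-1}+c_1)=0,
\]
so for each $n$ either $b_{n+1}=b_{n-1}$ or $b_{n+1}+b_n+b_{n-1}=C$. A short combinatorial argument then shows the sequence $(b_n)$ can take at most three distinct values, and an almost-periodic sequence with finite range is periodic. This is purely algebraic on the single operator $H$ and needs no analysis of the hull $T$; that is the missing idea in your proposal.
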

That this is true for the case with $n = 0$ gaps has been explored in-depth and is quite well-known \cite{BOR46, DEN04, HOC75, KS03}.  Similarly, while we have never seen the case $n = 1$ explicitly addressed, it has probably been observed before by virtue of trace formulas.  The real novelty of this theorem is that the same holds when $n = 2$, which we will see is the first truly nontrivial case.  Of course, this raises the following natural follow-up question: are there any aperiodic absolutely continuous DSOs with finite-gap spectrum? While Theorem \ref{thm:lowgapper1} is evidence that the answer could be negative, we are reluctant to make a conjecture one way or another.  Theorem \ref{thm:lowgapper1} is thoroughly algebro-geometric in nature, and it may be that the result is a consequence of the low dimension of the corresponding isospectral torus rather than an illustration of a universal principle.

Theorems \ref{thm:specatyp} and \ref{thm:lowgapper1} are more precisely viewed as corollaries of broader statements which can be made in the context of reflectionless Jacobi operators, which we now define.  Denote by $r(n,z,J) =  \langle \delta_n, (J-z)^{-1}\delta_n \rangle$ the Green's function for $J$.  For each $n \in \bbZ$ the Green's function $r(n,z,J)$ has almost-everywhere defined radial limits on $\sigma(J)$; that is, for almost-every $x \in \sigma(J)$, $\lim_{\e \downarrow 0} r(n,x + i\e,J)$ exists.  When this limit exists, we denote it by $r(n,x+i0,J)$.

Consider a positive-measure compact $E \subset \bbR$.  We say that the Jacobi operator $J$ is reflectionless on $E$ when $\Re(r(n,x+i0,J)) = 0$ for (Lebesgue) almost-every $x \in E$ for all $n \in \bbZ$.  A Jacobi operator is called reflectionless if it is reflectionless on its spectrum $\sigma(J)$.  There is a subtle and intimate relationship between the reflectionless property, almost periodicity, and the geometry and absolute continuity of the spectrum \cite{AVI15, BRS10, GMZ08, GY06, GZ09, KOT84, REM11, SY97, VY14}.  From a heuristic perspective, reflectionless Jacobi operators are those whose dynamics lack reflection; that is to say, those operators whose states coming completely from $-\infty$ in backward time proceed completely to $+\infty$ in forward time \cite{BRS10}.  Periodic Jacobi operators are perhaps the initial example of reflectionless operators, but, more generally, almost-periodic Jacobi operators are reflectionless on the essential support of their a.c. spectrum \cite{KOT84, REM11}.  A partial converse likewise holds under certain geometric restrictions on the spectrum \cite{SY97}; however, the converse may fail in spectacular fashion without these restrictions \cite{AVI15, VY14}.  

Fix a positive-measure compact $E \subset \bbR$, and define the isospectral torus for $E$ as
\begin{align*}
\cJ(E) := \{ J : \sigma(J) \subset E \text{ and } J \text{ reflectionless on } E\}.
\end{align*}
This is a compact, path-connected subset of the space of uniformly bounded Jacobi operators when endowed with the topology of pointwise convergence \cite{PR11, REM13}.   The action $\cS$ of conjugation by the left-shift $S : \delta_n \mapsto \delta_{n+1}$ preserves both the spectrum and the reflectionless condition, and thus $(\cJ(E), \cS)$ is a discrete-time dynamical system.  Theorem \ref{thm:specatyp} is a straightforward consequence of the following dynamical characterization of those isospectral tori containing a DSO:
\begin{thm}
\label{thm:notminimal}
Fix a positive-measure compact $E \subset \bbR$, and suppose there exists a DSO $H_V \in \cJ(E)$.  Then either $V$ is a constant potential $V = C$, or the dynamical system $(\cJ(E), \cS)$ is not minimal.
\end{thm}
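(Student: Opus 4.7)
The strategy is to argue the contrapositive: assume that $(\cJ(E), \cS)$ is minimal, and conclude that the potential $V$ of any DSO $H_V \in \cJ(E)$ must be constant.

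The first move is a formal closure argument. Each map $J \mapsto a_n(J)$ is continuous in the topology of pointwise convergence, and conjugation by $S$ merely relabels the indices of the Jacobi parameters. Thus the set
\begin{align*}
\cD := \{J \in \cJ(E) : a_n(J) = 1 \text{ for every } n \in \bbZ\}
\end{align*}
is a closed, $\cS$-invariant subset of $\cJ(E)$, and it is non-empty because $H_V \in \cD$. Minimality of $(\cJ(E), \cS)$ therefore forces $\cD = \cJ(E)$: every reflectionless Jacobi operator with spectrum in $E$ is in fact a DSO. Equivalently, the orbit $\{\cS^k H_V\}_{k \in \bbZ}$ is dense in $\cJ(E)$, all of its members are DSOs, and the DSO condition is closed, so every $J$ in the closure inherits the property $a_n(J) \equiv 1$.

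The heart of the proof is then to extract from the identity $\cD = \cJ(E)$ the conclusion that $\cJ(E)$ reduces to a single point. The key analytic input is the diagonal Green's function $r(n, z, J) = \langle \delta_n, (J-z)^{-1}\delta_n\rangle$: for reflectionless $J$ on $E$ it takes purely imaginary boundary values on $E$ and extends analytically across $E$ by Schwarz reflection, and its Taylor expansion at infinity
\begin{align*}
r(n, z, J) = -\frac{1}{z} - \frac{b_n(J)}{z^2} - \frac{a_n(J)^2 + a_{n+1}(J)^2 + b_n(J)^2}{z^3} - \cdots
\end{align*}
encodes the Jacobi parameters at site $n$. Imposing $a_n(J) \equiv 1$ universally (across both $n$ and $J \in \cJ(E)$) collapses the admissible reflectionless Green's functions: through the appropriate trace formulas --- explicit theta-function identities on the Jacobian of the underlying hyperelliptic curve in the finite-gap case, and the abstract Krein spectral shift / exponential Herglotz representation more generally --- the ``Dirichlet data'' parameterizing $\cJ(E)$ are pinned down, so $\cJ(E)$ must consist of a single point.

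Once $\cJ(E) = \{H_V\}$, invariance under $\cS$ gives $V(n+1) = V(n)$ for all $n$, so $V$ is constant. The main obstacle is the rigidity step just described. In the finite-gap setting it is clean: $a_0$ is a nontrivial theta-function quotient on the $g$-dimensional Jacobian and cannot be identically equal to $1$ unless $g = 0$, so $\cD = \cJ(E)$ immediately forces the torus to collapse to a point. Extending this to an arbitrary positive-measure compact $E$ is the technical core of the theorem, and requires the more abstract analytic representation of $m$-functions for reflectionless Jacobi operators on $E$, exploiting that the simultaneous constraints $a_n \equiv 1$ rigidify the local Weyl data at every site.
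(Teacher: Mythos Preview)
Your closure argument showing $\cD = \cJ(E)$ under minimality is correct and matches the paper's first step exactly. The gap is in what you call the ``rigidity step.'' You assert that $a_n(J) \equiv 1$ for all $J \in \cJ(E)$ forces $\cJ(E)$ to collapse to a point, but you do not prove it: in the finite-gap case you invoke unspecified theta-function identities, and for general positive-measure compact $E$ --- which is the actual hypothesis of the theorem --- you only say it ``requires the more abstract analytic representation of $m$-functions'' without carrying anything out. That is not a proof; it is a plausibility sketch. In particular, the claim that the theta-quotient for $a_0$ cannot be identically $1$ on a $g$-torus with $g \ge 1$ is believable but needs an argument, and no substitute is offered in the general case.

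The paper bypasses all of this with the Toda lattice. Taking $P(z)=z$ in the Lax pair \eqref{eq:laxpair} gives
\begin{align*}
\partial_t a_n = a_n(b_{n+1}-b_n), \qquad \partial_t b_n = 2(a_n^2 - a_{n-1}^2).
\end{align*}
By Proposition~\ref{pr:rem1}(1), the flow $J(t)$ starting from any $J_0 \in \cJ(E)$ stays in $\cJ(E)$; since you have already shown every element of $\cJ(E)$ has $a_n \equiv 1$, it follows that $a_n(t) \equiv 1$ and hence $\partial_t a_n = 0$. The first equation then gives $b_{n+1} = b_n$ for every $n$, so in particular $V$ is constant. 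This is a two-line argument that works for arbitrary positive-measure compact $E$ and requires no spectral-theoretic machinery beyond the fact that the Toda flow preserves $\cJ(E)$. Your route, even if it could be completed in the finite-gap case, aims at a harder conclusion ($\cJ(E)$ is a singleton) via harder tools, and leaves the general case open.
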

\begin{rem}
Note that Theorem \ref{thm:notminimal} makes no assumption on $E$ beyond positive-measure and compact; in particular, the result applies to sets with infinitely many spectral gaps.  Furthermore, Theorem \ref{thm:notminimal} does not merely hold for the topology of pointwise convergence on $\cJ(E)$; it holds for any topology under which the limit of a DSO remains a DSO.
\end{rem}

The dichotomy in Theorem \ref{thm:notminimal} raises a follow-up question of its own: under the assumption that there exists a DSO $H \in \cJ(E)$, to what extent can the shift dynamics fail to be minimal? While we do not have a complete answer to this question, we can report the following progress:
\begin{thm}
\label{thm:lowgapper2}
Suppose $E \in \cK_1^n$ for $n = 0, 1,$ or $2$.  If there exists a DSO $H \in \cJ(E)$, the dynamical system $(\cJ(E), \cS)$ is periodic.
\end{thm}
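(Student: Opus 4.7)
The strategy is to exploit the classical identification of $\cJ(E)$ with the real $n$-torus $\bbT^n$ (see \cite{GHMT08, SY97}) under which the shift $\cS$ acts as translation by some vector $\bTh = (\theta_1,\ldots,\theta_n) \in \bbT^n$; periodicity of the dynamical system $(\cJ(E), \cS)$ is then equivalent to $\bTh$ being a torsion element, i.e.\ to every $\theta_i$ being rational.

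The cases $n = 0, 1$ are handled quickly. For $n = 0$ the torus $\cJ(E)$ is a single point and there is nothing to prove. For $n = 1$, any DSO in $\cJ(E)$ must have non-constant potential (a constant potential would yield a single-interval spectrum, contradicting $n = 1$), so Theorem \ref{thm:notminimal} forces the shift to be non-minimal on $\bbT^1$; on the circle a non-minimal rotation is exactly a rational rotation, which is periodic.

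The substantive case is $n = 2$. Theorem \ref{thm:notminimal} supplies integers $(k_1, k_2) \neq (0, 0)$ with $k_1\theta_1 + k_2\theta_2 \in \bbZ$. If both $\theta_i$ are already rational we are done; otherwise $\bTh$ is non-torsion in $\bbT^2$ and its orbit closure is a finite union of translates of a non-trivial $1$-dimensional subtorus $T_0 \subset \bbT^2$. Since the DSO condition ``$a_n(\phi) = 1$ for all $n$'' is preserved under pointwise limits, every element of the orbit closure of the DSO $H$ is itself a DSO, and in particular some affine translate $\phi_0 + T_0$ lies inside the codimension-one algebraic subvariety $C_1 := \{\phi \in \cJ(E) : a_1(\phi) = 1\}$.

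The principal difficulty is ruling out this last configuration. I would use the explicit description of $\cJ(E)$ as the Jacobian of the genus-$2$ hyperelliptic Riemann surface $y^2 = \prod_{j=0}^{5}(x - \alpha_j)$ with branch points at the six gap edges of $E$, together with the classical theta-function expression for the coefficient $a_1$ on this Jacobian. The restriction of $a_1$ to the affine subtorus $\phi_0 + T_0$ would then be a real-analytic quasiperiodic function of one variable, and constancy of this restriction forces the underlying theta-function ratio to degenerate; for a generic genus-$2$ curve this is impossible, while in the degenerate regime it can only occur if the Jacobian splits isogenously as a product of elliptic curves with $T_0$ aligned to a factor, at which point a direct computation of the Abel--Jacobi shift vector in terms of normalized holomorphic differentials shows both components of $\bTh$ must already be rational, contradicting the assumption. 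Equivalently, one could execute this rigidity step in Dirichlet-data coordinates $(\mu_1(n), \mu_2(n))$, using the algebraic formula $a_n^2 = R(\mu_1(n), \mu_2(n))$ and the Toda-type flow describing the shift to verify directly that no one-parameter family of Dirichlet data can sustain $a_n = 1$ for all $n$ under an aperiodic translation.
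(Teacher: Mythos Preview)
Your handling of $n=0$ and $n=1$ matches the paper's argument essentially verbatim. The divergence is at $n=2$, and there your proposal has a genuine gap.

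For $n=2$ you correctly reduce to showing that $a_1$ cannot be identically $1$ along a one-dimensional affine subtorus $\phi_0 + T_0 \subset \cJ(E)$, but you do not actually prove this rigidity statement; you outline two possible attacks (theta-function degeneration on the genus-$2$ Jacobian, or a Dirichlet-data computation) without executing either. The phrase ``for a generic genus-$2$ curve this is impossible'' is a red flag: the theorem must hold for \emph{every} $E\in\cK_1^2$, so a genericity argument does not suffice, and your treatment of the non-generic (split Jacobian) case is a one-sentence assertion rather than a proof. Making either of your sketches rigorous would require substantial work with theta identities or explicit Abel--Jacobi computations, and it is not clear the argument closes.

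The paper avoids all of this with a much more elementary device. Since $E$ has two gaps, Proposition~\ref{pr:rem1}(3) forces $H_V$ to be stationary for some monic cubic $P$, and writing out the stationary equation $[P(H_V)^+-P(H_V)^-,H_V]=0$ with $a_n\equiv 1$ yields the factored recurrence
\[
(b_{n+1}-b_{n-1})\bigl(b_{n+1}+b_n+b_{n-1}+c_1\bigr)=0.
\]
Thus for each $n$ either $b_{n+1}=b_{n-1}$ or $b_{n+1}+b_n+b_{n-1}=C$ for a fixed constant $C$. A short induction on consecutive pairs $(b_n,b_{n-1})$ shows that once one instance of the second alternative occurs, all subsequent $b_n$ lie in the three-element set $\{b_0,b_1,b_2\}$. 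Hence $b$ takes only finitely many values; combined with almost-periodicity (Theorem~\ref{thm:sy}), this forces $b$ to be periodic. This is a direct combinatorial argument on the sequence $b$ and requires no theta functions, no Jacobians, and no case analysis on the moduli of the curve. I recommend replacing your $n=2$ sketch with this stationary-Toda computation.
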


Of course, without the assumptions of finite-gap and reflectionless, the conclusion of Theorem \ref{thm:lowgapper2} fails completely: one can construct limit-periodic DSOs with pure point spectrum and finitely many gaps \cite{POS83}, and the subcritical AMO and Pastur-Tkachenko class of DSOs are reflectionless with homogeneous infinite-gap spectra \cite{DGSV15, FLta}.  But it would be very interesting to construct an example of a reflectionless Jacobi operator with periodic off-diagonal, aperiodic main diagonal, and finite-gap spectrum.

This paper will be structured as follows: Section 2 contains a brief crash-course on the requisite background material.  Section 3 contains the majority of the proofs of the above theorems, save for a proposition demonstrating the genericity with which Theorem \ref{thm:specatyp} holds.  This proposition (which may be well-known, although we could not find it in the literature) is proven in Section 4.  Section 5 can be viewed as an appendix; it includes some basic facts about the Toda hierarchy that we implement in our proofs.

\begin{ack}
We would like to thank Daniel Bernazzani and Vitalii Gerbuz for a number of helpful conversations, and offer a special thanks to Jake Fillman for pointing out the application of these methods to off-diagonal Jacobi operators.
\end{ack}

\section{Background}

A Jacobi operator is a linear operator $J$ on sequences $u \in \bbC^\bbZ$ parametrized by a pair of bounded, real-valued sequences $a, b \in \ell^\infty(\bbZ)$.   The operator is defined termwise by
\begin{align*}
(Ju)_n = a_nu_{n-1} + b_nu_n + a_{n+1}u_{n+1}.
\end{align*}
In this paper, we will deal only with those Jacobi operators whose off-diagonal sequences $a$ have entries which are positive and bounded uniformly away from $0$.   We make no notational distinction between $J$ and its restriction to $\ell^2(\bbZ)$.

The resolvent set $\rho(J)$ of a Jacobi operator $J$ is defined as the set of complex energies $z$ for which the operator $(J-zI)^{-1}$ exists and is bounded on $\ell^2(\bbZ)$.  The complement of the resolvent set in the plane is called the spectrum of $J$, denoted $\sigma(J) = \bbC \setminus \rho(J)$.  Because $J$ is bounded and self-adjoint, $\sigma(J)$ is a compact subset of $\bbR$.  The absolutely continuous spectrum $\sigma_{ac}(J)$ is the common topological support of the absolutely continuous parts of all spectral measures for $J$.  

Define the Green's function
\begin{align*}
r(n,z,J) := \langle \delta_n, (J-zI)^{-1}\delta_n \rangle, \;\; z \in \rho(J), n \in \bbZ.
\end{align*}  
For each $n \in \bbZ$, the Green's function $r(n,z,J)$ is Herglotz; that is, a function which is analytic from the upper half plane to itself.  As such, $r(n,z,J)$ has Lebesgue almost-everywhere defined radial limits on $\sigma(J)$: for almost-every $x \in \sigma(J)$, $\lim_{\e \downarrow 0} r(n,x + i\e,J) =: r(n,x+i0,J)$ exists in $\bbC \cup \{\infty\}$.  We say $J$ is reflectionless on a positive-measure set $E$ when
\begin{align*}
\Re(r(n,x+i0,J)) = 0
\end{align*}
for (Lebesgue) almost-every $x \in E$ for every $n \in \bbZ$.   If $J$ is reflectionless on its spectrum $\sigma(J)$, we simply call $J$ reflectionless.

Recall also that we define the isospectral torus for $E$ as
\begin{align*}
\cJ(E) := \{ J : \sigma(J) \subset E \text{ and } J \text{ reflectionless on } E\}.
\end{align*}
We endow $\cJ(E)$ with the topology of pointwise convergence of the parametrizing sequences $(a,b)$.  Conjugation by the left-shift $S : \delta_n \mapsto \delta_{n+1}$ preserves the reflectionless condition and the spectrum: if $J \in \cJ(E)$, so is $\cS J := SJS^{-1}$.

Almost-periodic Jacobi operators $J$ are reflectionless on the essential support $\Sigma_{ac}(J)$ of their a.c. spectrum:
\begin{thm}[Theorem 1.4, \cite{REM11}]
\label{thm:rem}
An almost-periodic Jacobi operator $J$ is reflectionless on $\Sigma_{ac}(J)$.
\end{thm}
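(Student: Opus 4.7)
The approach is to reduce this to the deeper structural result in \cite{REM11} (the ``oracle theorem''): for any bounded Jacobi operator $J$---meaning every pointwise limit of a sequence $\cS^{n_k} J$ with $n_k \to +\infty$---is reflectionless on the essential support $\Sigma_{ac}(J)$ of its absolutely continuous spectrum. Granting this, the statement reduces to showing that an almost-periodic $J$ is a right-limit of itself. Remling's oracle theorem is proved in a much broader setting (it applies to any Jacobi operator, not just almost-periodic ones), and exploiting almost-periodicity is what will let us apply it at $\tilde{J} = J$ itself.

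To see that $J$ is a right-limit of itself, I would appeal to Bochner's characterization of almost-periodicity applied to the parametrizing pair $(a, b)$. It guarantees that for every $\e > 0$ the set of $\e$-almost-periods
\[
P_\e := \left\{ n \in \bbZ : \sup_{k \in \bbZ} \left( |a_{n+k} - a_k| + |b_{n+k} - b_k| \right) < \e \right\}
\]
is relatively dense in $\bbZ$, and in particular contains arbitrarily large positive integers. A standard diagonal extraction applied to a sequence $\e_m \downarrow 0$ then produces $n_k \to +\infty$ with $\cS^{n_k} J \to J$ uniformly in the parametrizing sequences (hence certainly pointwise), exhibiting $J$ as one of its own right-limits. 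Applying the oracle theorem with $\tilde{J} = J$ immediately yields that $J$ is reflectionless on $\Sigma_{ac}(J)$.

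The main obstacle is of course the oracle theorem itself, which is where all the technical content resides. Its proof analyzes the half-line Weyl $m$-functions $m_\pm(n, z)$ and their radial boundary values: absolute continuity on $\Sigma_{ac}(J)$ forces a rigid asymptotic geometry on the nested Weyl disks at real boundary points, and passage to a right-limit constrains the limiting $m$-functions to satisfy the reflectionlessness identity $m_+(x+i0) = -\overline{m_-(x+i0)}$ for Lebesgue-a.e.\ $x \in \Sigma_{ac}(J)$, which is equivalent to $\Re\, r(n, x+i0, \tilde{J}) = 0$ for every $n \in \bbZ$. This step requires delicate Herglotz-function machinery together with a measure-theoretic argument handling the exceptional null set, and I would invoke it as a black box rather than reprove it here.
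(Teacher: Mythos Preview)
The paper does not give its own proof of this statement; it is quoted verbatim as Theorem~1.4 of \cite{REM11} and used as a black box. Your sketch is exactly the derivation Remling gives in \cite{REM11}: the main theorem there says every right-limit of a bounded Jacobi operator is reflectionless on $\Sigma_{ac}(J)$, and an almost-periodic $J$ is a right-limit of itself, so the result follows. There is nothing to compare against in the present paper.

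Two small points on the write-up. First, your opening sentence is garbled: you presumably meant ``every \emph{right-limit} of a bounded Jacobi operator $J$---meaning every pointwise limit of a sequence $\cS^{n_k} J$ with $n_k \to +\infty$---is reflectionless on $\Sigma_{ac}(J)$.'' As written, the clause ``for any bounded Jacobi operator $J$---meaning every pointwise limit\ldots'' does not parse. Second, the relatively-dense-almost-periods property you invoke is Bohr's characterization, not Bochner's; Bochner's criterion is precompactness of the set of translates (which is in fact the definition adopted in this paper's introduction). The two are equivalent and your argument is unaffected, but the attribution should be corrected.
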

Under certain geometric restrictions on the spectrum, one can likewise conclude almost-periodicity from reflectionlessness:
\begin{thm}[Theorem C, \cite{SY97}]
\label{thm:sy}
Suppose $E \in \cK^n$.  Then the dynamical system $(\cJ(E), \cS)$ is conjugate to $((\bbR/\bbZ)^n, T_\omega)$, where $T_\omega(\alpha) = \alpha + \omega$, and $\omega$ is given by
\begin{align*}
\omega_j = \omega_E((-\infty, E_j^-]).
\end{align*}
Furthermore, every $J \in \cJ(E)$ is almost-periodic and purely absolutely continuous.
\end{thm}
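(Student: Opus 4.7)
The plan is to parametrize $\cJ(E)$ by the Jacobian variety of the hyperelliptic Riemann surface $\Sigma_E$ associated to $E$ and to identify the shift with a translation on it. Writing the $2(n+1)$ band edges as $e_0 < e_1 < \cdots < e_{2n+1}$, form the double cover $\Sigma_E$ cut out by $y^2 = \prod_{j=0}^{2n+1}(z - e_j)$; after compactifying at two points $\infty_\pm$ over $z = \infty$, this is a compact Riemann surface of genus $n$. For each $J \in \cJ(E)$ and each $m \in \bbZ$, the reflectionless condition $\Re\, r(m, x+i0, J) = 0$ together with the Herglotz property of $r(m, \cdot, J)$ and a Schwarz-reflection argument forces $r(m, \cdot, J)$ to lift to a meromorphic function on $\Sigma_E$ whose divisor has exactly $n$ finite poles, with one projection $\mu_j^{(m)}$ falling into each closed gap $[E_j^-, E_j^+]$ and each equipped with a sheet label. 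Recording these Dirichlet data at $m = 0$ (which live on one topological circle per gap) and composing with the Abel--Jacobi map defines a map $\Phi \colon \cJ(E) \to (\bbR/\bbZ)^n$.

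Next I would verify that $\Phi$ is a homeomorphism. Injectivity follows from the standard trace formulas, which recover $b_m$ and $a_m^2$ as explicit symmetric functions of $\{\mu_j^{(m)}\}_{j=1}^n$ and the band edges, with $a_m > 0$ fixed and with the sheet labels encoding the sign data needed to propagate from site $m$ to $m+1$ via the Jacobi recursion. Surjectivity and continuity are supplied by the explicit Baker--Akhiezer / Riemann theta function formulas for $(a_m, b_m)$ in terms of a divisor class on $\Sigma_E$; the reality of the divisor (coming from the complex-conjugation involution on $\Sigma_E$) guarantees that the constructed parameters are bounded and positive, that $\sigma(J) = E$, and that $J$ is reflectionless on $E$.

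To compute the shift action, observe that the divisor $\mathcal{D}_{m+1}$ differs from $\mathcal{D}_m$ by the principal divisor of an explicit rational function on $\Sigma_E$ built from the Jacobi recursion. The Abel transform converts this difference into translation by a fixed vector $\omega \in (\bbR/\bbZ)^n$, namely the Abel image of the divisor $(\infty_+) - (\infty_-)$. A classical potential-theoretic calculation identifies the components of $\omega$ with harmonic measures via the link between $b$-periods of abelian differentials of the third kind on $\Sigma_E$ and the equilibrium measure on $E$, yielding $\omega_j = \omega_E((-\infty, E_j^-])$. Almost-periodicity of $(a,b)$ is then immediate, since any translation on a compact abelian group is Bohr almost-periodic; and pure absolute continuity of $J$ on $E = \sigma(J)$ follows from reflectionlessness on $E$ together with its finite-gap (hence homogeneous) structure, via Kotani--Remling theory (cf.\ Theorem~\ref{thm:rem}).

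The main technical obstacle is the surjectivity/continuity half of the parametrization step: one must produce, for each divisor class on $\Sigma_E$, a bounded positive reflectionless Jacobi matrix with spectrum exactly $E$ that depends continuously on the class. This is the theta-functional core of the argument and requires careful accounting of the real structure on $\Sigma_E$, the Riemann vanishing theorem, and the positivity of the resulting $a_m$. Everything else---the trace-formula injectivity, the identification of the shift with a translation, the harmonic-measure interpretation of $\omega$, and the deduction of almost-periodicity and absolute continuity---is comparatively routine once this explicit inverse to $\Phi$ is in hand.
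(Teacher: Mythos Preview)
The paper does not contain a proof of this theorem: it is quoted verbatim as a background result, attributed to Sodin--Yuditskii \cite{SY97}, and used as a black box throughout. So there is no ``paper's own proof'' to compare against.

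Your outline is the classical algebro-geometric route for the finite-gap case (hyperelliptic curve, Dirichlet divisor data, Abel--Jacobi linearization, Baker--Akhiezer/theta construction for the inverse), and as a sketch it is correct; this is essentially the approach one finds in \cite{GHMT08} or \cite{TES00}. One small point: your final step deducing pure absolute continuity ``via Kotani--Remling theory (cf.\ Theorem~\ref{thm:rem})'' is circular as stated, since Theorem~\ref{thm:rem} goes the other way (almost-periodic $\Rightarrow$ reflectionless on $\Sigma_{ac}$). For finite-gap $E$ the cleaner argument is direct: the explicit form of the diagonal Green's function on $\Sigma_E$ shows its boundary values on $E$ are purely imaginary and nonvanishing a.e., which gives absolute continuity of the spectral measures without appealing to Kotani theory.

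It is worth noting that the source you are reproving, \cite{SY97}, actually establishes a far more general statement (homogeneous $E$, possibly with infinitely many gaps) and does so by a genuinely different method---Hardy spaces of character-automorphic functions on the universal cover of $\hat{\bbC}\setminus E$ rather than the finite-genus Riemann surface machinery. Your approach is adequate for the restricted statement $E\in\cK^n$ that the present paper quotes, but it would not extend to the infinite-gap setting where the compact Riemann surface is replaced by a Widom domain and theta functions are no longer available.
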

The values $\omega_j$ are called the harmonic frequencies of $E$.  Here, $\omega_E$ is the harmonic measure for the domain $\Omega = \hat{\bbC} \setminus E$ with pole at infinity.  

Consider the set $\cM_1(E)$ of Borel probability measures supported on $E$.  For $\mu \in \cM_1(E)$, define its potential energy $\cE(\mu)$ as
\begin{align*}
\cE(\mu) = \int_{E}\int_{E} \log\left(|x-y|^{-1}\right) d\mu(x)d\mu(y)
\end{align*}
If there exists $\mu \in \cM_1(E)$ for which $\cE(\mu) < \infty$, then $\cE(\mu)$ has a unique minimizer called the equilibrium measure $d\rho_E$.  When $E$ is finite-gap, the harmonic measure $\omega_E(dx)$ agrees with the equilibrium measure \cite[Theorem 4.3.14]{RAN95}.  In this sense, we define the logarithmic capacity of $E$
\begin{align*}
\capa(E) &:= \sup_{\mu \in \cM_1(E)} \exp(-\cE(\mu)) \\
&= \exp(-\cE(\omega_E(dx))).
\end{align*}
The capacity scales linearly; that is, if $E$ is a compact set, $\alpha > 0$, and $\alpha E$ is the $\alpha$-rescaling of $E$, one has \cite[Theorem 5.1.2]{RAN95}
\begin{align}
\label{eq:capscale}
\capa(\alpha E) = \alpha \capa(E), \;\; \alpha > 0.
\end{align}

The genericity of our theorems holds in the following sense: fix a finite union of nondegenerate closed intervals $E \in \cK^n$, and write it as a closed interval with a finite union of maximal open gaps removed:
\begin{align}
\label{eq:Ecpct}
E = [E_0^+,E_0^-] \setminus \bigcup_{j =1}^n (E_j^-, E_j^+).
\end{align}
To avoid degenerate spectral bands, we assume $E_{j}^+ < E_{j+1}^- $ for each $0 \leq j \leq n$, with the convention $E_{n+1}^- := E_0^-$.  For $n \in \bbN$, denote
\begin{align*}
T^n = \{(x_1, \cdots , x_{n}) \in \bbR^n : x_1 < x_2 < \cdots < x_{n} \}.
\end{align*}  
Any compact $E \in \cK^n$ written as \eqref{eq:Ecpct} can be parametrized by a vector in $T^{2(n+1)}$ by reading off endpoints from left to right:
\begin{align*}
E \mapsto (E_0^+, E_1^-, E_1^+, E_2^-,\cdots, E_n^+, E_0^-)^\top.
\end{align*}
This parametrization naturally induces a topology and measure on $\cK^n$ via the subspace topology of $T^{2(n+1)} \subset \bbR^{2(n+1)}$.

The topology on $\cK^n$ induces a projective topology on $\cK^n_\alpha$ in the following way.  By \eqref{eq:capscale}, we have that, with the identification $\cK^n$ with $T^{2(n+1)}$ above, we can topologize $\cK_\alpha^n$ with the subspace topology and pushforward measure from the projection from $\cK^n \cong \cK^n_\alpha \times \bbR_+$ (see Proposition \ref{pr:homeo} below).  With these conventions, we have the following:
\begin{prop}
\label{pr:genratind}
For each positive integer $n$ and real $\alpha > 0$, the set of $E \in \cK_\alpha^n$ with rationally dependent harmonic frequencies is meager and has Lebesgue measure zero.
\end{prop}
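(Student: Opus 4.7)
The plan is to study the map
\begin{align*}
\Phi : \cK_\alpha^n \to [0,1)^n, \qquad \Phi(E) = (\omega_1(E),\ldots,\omega_n(E)),
\end{align*}
and to show that the preimage under $\Phi$ of the rationally dependent locus
\begin{align*}
\cR_n = \bigcup_{(k_0,\ldots,k_n) \in \bbZ^{n+1}\setminus\{0\}} \big\{ \omega \in [0,1)^n : k_0 + k_1\omega_1 + \cdots + k_n\omega_n = 0\big\}
\end{align*}
is meager and Lebesgue-null in $\cK_\alpha^n$. Since $\cR_n$ is a countable union of affine hyperplanes in $[0,1)^n$, it is itself meager and null, so the task is to show that pulling back under $\Phi$ preserves these two properties.

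The first step is to record that $\Phi$ is real-analytic. The equilibrium measure of a finite-gap set $E$ has the classical representation $d\omega_E(x) = \pi^{-1}|P_E(x)||R_E(x)|^{-1/2}\chi_E(x)\,dx$, where $R_E$ is the degree-$2(n+1)$ polynomial vanishing at the band edges and $P_E$ is the degree-$n$ polynomial with one zero in each gap whose leading coefficient is fixed by the total-mass-one condition; the implicit function theorem applied to the period-integral conditions defining $P_E$ makes its coefficients, and hence $\omega_j(E) = \omega_E((-\infty, E_j^-])$, real-analytic in the band edges. Since $\capa$ is also real-analytic, $\cK_\alpha^n$ is a real-analytic submanifold of $\cK^n$ on which $\Phi$ restricts to a real-analytic map. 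The argument then reduces to verifying that $d\Phi$ has maximal rank $n$ on a dense open set $U \subset \cK_\alpha^n$: for then the complement $\cK_\alpha^n \setminus U$ — the common zero locus of the $n \times n$ minors of $d\Phi$ — is a proper real-analytic subvariety and therefore meager and null, while on $U$ the map $\Phi$ is locally an open submersion onto $\bbR^n$, so Fubini in local submersion charts together with openness of $\Phi|_U$ yields that $\Phi^{-1}(\cR_n) \cap U$ is also meager and null.

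The main obstacle is thus the full-rank claim for $d\Phi$. By real-analyticity it suffices to verify rank $n$ at a single point of $\cK_\alpha^n$; a convenient choice is the spectrum $E_0$ of a period-$(n+1)$ Jacobi operator, at which $\omega_j(E_0) = j/(n+1)$. At such $E_0$ the Jacobian $(\partial \omega_j/\partial E_k^{\pm})_{j,k}$, restricted to directions tangent to the capacity level set, is identified (up to an invertible change of basis) with a non-singular block of the period matrix of the hyperelliptic Riemann surface $y^2 = R_{E_0}(x)$ of genus $n$. Non-singularity of this block is classical — it is the non-degeneracy of the $A$-periods of a basis of holomorphic differentials of the first kind — and can alternatively be established by a direct Schiffer-Hadamard variational computation for the equilibrium measure. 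Piecing these three ingredients together yields the proposition.
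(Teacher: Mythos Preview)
Your outline is structurally the same as the paper's: prove the frequency map is real-analytic, establish that its Jacobian has rank $n$ generically, and then use this to pull back the null/meager set of rationally dependent vectors. Two differences are worth noting.

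First, the paper works on all of $\cK^n$ rather than on the level set $\cK_\alpha^n$, and only passes to $\cK_\alpha^n$ at the very end using the product decomposition $\cK^n \cong \cK_\alpha^n \times \bbR_+$ and scale-invariance of the frequencies. This is cosmetic.

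Second --- and this is the substantive point --- the paper establishes generic full rank by a soft argument rather than a pointwise verification: it observes that if the Jacobian were everywhere rank-deficient then by Sard's theorem the image of $\omega$ would be Lebesgue-null, which contradicts the known fact that the band masses of an $n$-gap set can be prescribed arbitrarily (the paper cites \cite{EY12, TOT09}). This avoids any computation. Your route instead verifies rank $n$ at a single periodic spectrum by identifying the restricted Jacobian with a block of the period matrix of the hyperelliptic curve $y^2 = R_{E_0}(x)$. That identification is plausible and has classical antecedents (Rauch-type variational formulas), but as written it is asserted, not proven: the derivatives $\partial\omega_j/\partial E_k^\pm$ involve both the explicit dependence of $R_E$ on the edges and the implicit dependence of $P_E$, and extracting a clean period-matrix block from this requires real work that your proposal defers to a ``direct Schiffer--Hadamard variational computation.'' If you carry that computation out (or cite a precise source), your argument is complete and arguably more self-contained than the paper's, which trades the computation for an appeal to an external existence result. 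As it stands, the full-rank step is the one place where your proposal is a sketch rather than a proof.
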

Though we could not explicitly find a result of this nature in the literature, this result could be concluded as a consequence of previous work (e.g., \cite{TOT09}).  We offer a self-contained proof of this proposition in Section 4.  

As a consequence of Theorem \ref{thm:sy} and Proposition \ref{pr:genratind}, in the finite-gap regime one generically has minimality of the shift on the isospectral torus.  In particular, it is typically the case that the hull of a given almost-periodic $J \in \cJ(E)$ is the entire isospectral torus.  To prove our theorems, we utilize this fact in conjunction with a family of integral curves on $\cJ(E)$ called the Toda Hierarchy.

Consider a bounded linear operator $A$ on $\ell^2(\bbZ)$.  Denote by $A^\pm$ the restrictions of $A$ to $\ell^2(\bbZ_\pm) \hookrightarrow \ell^2(\bbZ)$, where the inclusion map is given by assigning zeros to the left- or right-half line.  Fix a polynomial $P$ of degree $n+1 \geq 1$.  The $n^{th}$ Toda flow (for $P$) is the integral curve $J(t)$ of Jacobi operators satisfying the Lax pair
\begin{align}
\label{eq:laxpair}
\partial_t J &= [P(J)^+ - P(J)^-, J].
\end{align} 
There exist unique solutions to \eqref{eq:laxpair} for any bounded Jacobi initial condition $J_0$ \cite[Theorem 12.6]{TES00}.  When there exists a monic polynomial $P$ so that
\begin{align}
\label{eq:stationary}
[P(J)^+ - P(J)^-, J] = 0.
\end{align}
we say $J$ is stationary for $P$.

For particular choices of polynomial $P$, the Toda flow induces a system of differential equations on the parametrizing sequences $a, b \in \ell^\infty$.  The critical facts about the Toda flow that we employ are summarized in
\begin{prop}
\label{pr:rem1}
For any non-constant polynomial $P$:
\begin{enumerate}
\item \cite[Corollary 1.3]{PR11} Suppose $J(t)$ is the unique solution to \eqref{eq:laxpair} with $J(0) = J_0 \in \cJ(E)$, where $E = \sigma(J_0)$.  Then $J(t) \in \cJ(E)$ for all $t \in \bbR$.
\item \cite[Theorem 12.8]{TES00} The stationary solutions $\partial_t J = 0$ of \eqref{eq:laxpair} are finite-gap reflectionless Jacobi operators.
\item \cite[Corollary 12.10]{TES00} If $J$ is stationary for $P$, then the spectrum of $J$ has at most $\deg(P) - 1$ (open) spectral gaps.  Furthermore, for each $E \in \cK^n$, there exists a polynomial $P$ of degree $n+1$ so that every $J \in \cJ(E)$ is stationary for $P$.
\end{enumerate}
\end{prop}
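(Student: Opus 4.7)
The plan is to derive the three facts in turn. For part (1), I would begin by noting that $B(J) := P(J)^+ - P(J)^-$ is bounded and skew-adjoint, since $P(J)$ is self-adjoint and $(P(J)^+)^* = P(J)^-$ up to sign corrections on the ``seam'' where the two half-line restrictions overlap. The Lax equation $\partial_t J = [B(J),J]$ then integrates to $J(t) = U(t) J_0 U(t)^*$ for a unitary cocycle $U(t)$ solving $\partial_t U = B(J) U$, so $\sigma(J(t)) = \sigma(J_0)$ is preserved. Preservation of reflectionlessness follows from deriving the induced evolution of the half-line Weyl--Titchmarsh $m$-functions $m_\pm(z,t)$: these satisfy a Riccati-type ODE whose right-hand side is polynomial in $z$, $m_+$, $m_-$ with real coefficients, and one verifies directly that the involution $m_+(x+i0) = -\overline{m_-(x+i0)}$ characterizing reflectionlessness on $E$ is invariant under the flow.

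For part (2), the stationarity condition $[B(J),J] = 0$, combined with the finite-band structure of $B(J)$, yields an algebraic commutation relation between $J$ and the shift operator that carves out a hyperelliptic Riemann surface $y^2 = R(x)$; the spectrum $\sigma(J)$ is then the union of intervals on which $R \geq 0$, hence finite-gap. The diagonal Green's function admits a Baker--Akhiezer representation $r(n,z,J) = Q_n(z)/\sqrt{R(z)}$ on this surface for a polynomial $Q_n$, whose boundary values on $\sigma(J)$ are purely imaginary, yielding reflectionlessness.

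For part (3), the hyperelliptic curve constructed in (2) from a $P$ of degree $n+1$ has $\deg R \leq 2(n+1)$, so the spectrum consists of at most $n+1$ bands and hence at most $n$ open gaps. Conversely, given $E \in \cK^n$ with band edges as in \eqref{eq:Ecpct}, I would construct $P$ explicitly as the monic polynomial of degree $n+1$ arising from the canonical spectral polynomial associated with the isospectral torus, then verify \eqref{eq:stationary} for each $J \in \cJ(E)$ by an inductive computation exploiting the algebraic recursion defining the Toda hierarchy.

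The main obstacle is part (2): passing from the algebraic commutation identity to the geometric conclusion ``finite-gap and reflectionless'' requires either Burchnall--Chaundy type theory of commuting difference operators or an explicit Baker--Akhiezer construction on the spectral curve, either of which is substantially more technical than the self-adjointness bookkeeping that suffices for parts (1) and (3).
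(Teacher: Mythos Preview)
Your outline is sound, but it differs from the paper's treatment chiefly in scope. The paper does not prove parts (1) and (2) at all: they are taken directly on citation from \cite{PR11} and \cite{TES00}, and the remark following the proposition says explicitly that only part (3) is argued, in Section~5. Your sketches for (1) and (2) are reasonable summaries of how those cited results are established in the references---unitary equivalence via the propagator $U(t)$ plus an $m$-function evolution argument for (1), and Burchnall--Chaundy/Baker--Akhiezer theory on the spectral curve for (2)---and you correctly flag (2) as the technically deepest step.

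For part (3), the paper's argument (Proposition~\ref{pr:tes1res}) is shorter and more black-box than yours. It invokes a characterization from Teschl, recorded as Proposition~\ref{pr:tes1}: $J \in \cJ(E)$ is stationary for a degree-$(m+1)$ Toda polynomial if and only if $Q_E$ divides some $Q_{\tilde E} = Q_{m-n}^2 Q_E$ of degree $2(m+1)$ whose Laurent coefficients match those of $P$. The forward direction of (3) is then the one-line degree comparison $2(n+1) \le 2(m+1)$, and the converse follows by taking $Q_{m-n} = 1$. Your route reaches the same degree count via the hyperelliptic curve $y^2 = R(x)$ built in (2), and handles the converse by constructing $P$ from the spectral data and verifying \eqref{eq:stationary} by recursion. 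Both work; the paper's version is quicker because it outsources the structural content to \cite{TES00}, while yours is more self-contained at the cost of redoing work already there.
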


\begin{rem}
We note that property \textit{(3)} above is not stated verbatim from its cited source; we offer its proof in Section 5.
\end{rem}

\section{Proofs of the Main Theorems}

We now have all of the requisite tools to prove our theorems.

\begin{proof}[Proof of Theorem \ref{thm:notminimal}]
Consider the Toda lattice, given by \eqref{eq:laxpair} with $P(z) = z$.  The associated system of differential equations is
\begin{align}
\label{eq:tl1}
\partial_t a_n &= a_n(b_{n+1} - b_n), \\
\label{eq:tl2}
\partial_t b_n &= 2(a_n^2 - a_{n-1}^2).
\end{align}
Suppose there exists a DSO $H \in \cJ(E)$ and that $(\cJ(E), \cS)$ is minimal.  Then, by minimality of the shift and the topology on $\cJ(E)$, every $J = J(a,b) \in \cJ(E)$ has $a_n = 1$ for all $n$.  Consider an arbitrary $J_0 \in \cJ(E)$, and let $J(t)$ solve \eqref{eq:laxpair} with $J(0) = J_0$.  By Proposition \ref{pr:rem1}, $J(t) \in \cJ(E)$ for all $t$, and thus $a_n(t) = 1$ for all $t \in \bbR$ and all $n \in \bbZ$.  But then $\partial_t a_n = 0$, and by \eqref{eq:tl1} $b_{n+1} = b_n$ for all $n \in \bbZ$.  
\end{proof}
From here, the proof of Theorem \ref{thm:specatyp} is a matter of some bookkeeping:
\begin{proof}[Proof of Theorem \ref{thm:specatyp}]
Fix an $E \in \cK^n_1$, $n \geq 1$, and suppose there exists a DSO $H \in \cJ(E)$.  Since $E$ has at least one spectral gap, $H = H_V$ does not have constant potential $V$, so the dynamics $(\cJ(E), \cS)$ are not minimal by Theorem \ref{thm:notminimal}.  But by Theorem \ref{thm:sy}, the dynamical system $(\cJ(E), \cS)$ conjugates to $((\bbR/\bbZ)^n, T_\omega)$.  Since $(\cJ(E), \cS)$ is not minimal, some entries of $\omega$ are rationally dependent.  This condition is meager and measure zero by Proposition \ref{pr:genratind}.
\end{proof}
We extrapolate the ideas used above to prove Theorems \ref{thm:lowgapper1} and \ref{thm:lowgapper2}.  

\begin{proof}[Proof of Theorem \ref{thm:lowgapper2}]
Fix a compact $E$ with zero, one, or two open spectral gaps.  We prove by case analysis on the number of gaps:
\begin{enumerate}
\item If $E$ has no spectral gaps, $\cJ(E)$ is a singleton set.  If $\cJ(E)$ contains a DSO $H_V$, $H_V$ must be stationary for $\eqref{eq:tl1}$ with $a_n = 1$ for all $n$; in particular, $H_V = S+S^{-1} + C$ is $1$-periodic.
\item Suppose $E$ has one spectral gap.  Then if $\cJ(E)$ contains a DSO $H_V$, $V$ cannot be constant, and the shift dynamics thus cannot be minimal by Theorem \ref{thm:notminimal}.  Thus, the lone harmonic frequency must be rational, and by Theorem \ref{thm:sy}, the dynamical system $(\cJ(E), \cS)$ is periodic, as claimed.
\item Suppose $E$ has two spectral gaps, and suppose there is a DSO $H_V \in \cJ(E)$.  Then by Proposition \ref{pr:rem1}, $H_V$ is stationary for some monic degree 3 polynomial $P(z) = z^3 + c_1z^2+c_2z$. Noting that \eqref{eq:stationary} is linear in the choice of polynomial $P$, it is a long but straightforward calculation (cf. \cite[Equation (12.44)]{TES00}) to see that, from $\partial_t b_n = 0$ and $a_n = 1$, we have
\begin{align*}
(b_{n+1} - b_{n-1})(b_{n+1} + b_n + b_{n-1}) = -c_1(b_{n+1} - b_{n-1}).
\end{align*}
Thus, for some constant $C = -c_1$, at least one of
\begin{align}
\label{eq:case1}
b_{n+1} + b_n + b_{n-1} &= C \\
\label{eq:case2}
b_{n+1} &= b_{n-1}
\end{align}
must hold for every $n \in \bbZ$.  We claim that, if one of \eqref{eq:case1} or \eqref{eq:case2} holds for $b_n$ for all $n \in \bbZ$, then $b_n$ can assume only finitely many distinct values.

If $b_{n+1} = b_{n-1}$ for every $n$, $b$ is 2-periodic and we are done.  Otherwise, there exists some $n_0$ for which $b_{n_0+1} \neq b_{n_0-1}$.  By shifting a finite number of times, we may assume this $n_0 = 1$.  Thus, we have
\begin{align}
\label{eq:b2}
b_2 = C - (b_0 + b_1).
\end{align}
We proceed inductively.  Denote by 
\begin{align*}
\cB :=  \{ (b_i, b_j), 0 \leq i \neq j \leq 2 \}
\end{align*}
the set of all possible distinct pairs taken from $\{b_0, b_1, b_2\}$, listed with multiplicity.  Here, we have passed to considering distinct pairs $(b_i,b_j)$ because it is more challenging to proceed inductively via our relations \eqref{eq:case1}, \eqref{eq:b2} when multiplicities are not taken into account.

Suppose that the pair $(b_n, b_{n-1}) \in \cB$, and consider the pair $(b_{n+1}, b_n)$.  If $\eqref{eq:case2}$ holds, we are clearly done by assumption; otherwise, $b_{n+1} = C - (b_n + b_{n-1})$.  But by our assumption that $(b_n, b_{n-1}) \in \cB$, we have $b_n + b_{n-1} \in \{C - b_0, C - b_1, b_0 + b_1\}$.  In any case,  $b_{n+1} = C - (b_n + b_{n-1})$ implies $b_{n+1} \in \{b_0, b_1, b_2\} \setminus \{b_n\}$, and hence $(b_{n+1}, b_n) \in \cB$.  This shows that $b_n$ takes finitely many values for $n \geq 0$.  One can clearly apply the identical argument in reverse by the symmetry of \eqref{eq:case1} and \eqref{eq:case2} to conclude the same for $n < 0$.

Consequently, one sees that, if $(1,b)$ is stationary, $b_n$ can assume only finitely many values.  But because $H_V \in \cJ(E)$ and $E$ is finite-gap, $b_n$ is an almost-periodic sequence by Theorem \ref{thm:sy}.  An almost-periodic sequence taking finitely many values is periodic, so $H_V \in \cJ(E)$ is periodic.
\end{enumerate}
So, in each case, the existence of a DSO $H \in \cJ(E)$ implies that $H$ is periodic.  Thus, the compact $E$ must be of periodic type; that is, $\omega_j \in \bbQ$ for each $j = 1, \cdots, n$ (see, e.g., \cite[Corollary 5.5.19]{SIM11}).  By Theorem \ref{thm:sy}, it follows that the dynamical system $(\cJ(E), \cS)$ is periodic.
\end{proof}

Of course, this makes the proof of Theorem \ref{thm:lowgapper1} quite simple:

\begin{proof}[Proof of Theorem \ref{thm:lowgapper1}]
Fix a finite-gap compact $E$ with $n = 0, 1, $ or $2$ gaps, and suppose there exists an almost-periodic DSO $H$ with $\sigma(H) = E$ such that the essential support of the a.c. spectrum $\Sigma_{ac}(H) = E$ agrees with $E$ up to sets of Lebesgue measure zero.  Then by Theorem \ref{thm:rem}, $H \in \cJ(E)$, and by Theorem \ref{thm:lowgapper2}, $H$ is periodic.
\end{proof}

\begin{rem}
As noted in the Acknowledgments, we are thankful to Dr. Jake Fillman for pointing out that some of our methods extend beyond the class of DSOs.  Namely, we call $J_0 = J_0(a)$ an off-diagonal Jacobi operator (ODJO) if it is of the form
\begin{align*}
(J_0u)_n = a_n u_{n-1} + a_{n+1}u_{n+1}.
\end{align*}
Theorem \ref{thm:notminimal} holds with ``DSO", ``$H_V$", and ``$V$" replaced by ``ODJO", ``$J_0(a)$", and ``$a$", respectively, by noting that \eqref{eq:tl2} must be zero under the assumption of minimality of $(\cJ(E), \cS)$.  Consequently, the natural analogue of Theorem \ref{thm:specatyp} likewise holds for the ODJO class.  However, our methods do not immediately extend to the proofs of Theorems \ref{thm:lowgapper1} and \ref{thm:lowgapper2}, because the assumption that $b_n = 0$ leads to certain degeneracies in the Toda hierarchy.
\end{rem}

\section{Frequencies of Finite-gap Compacts are Generically Independent}

In this section, we prove Proposition \ref{pr:genratind}.

Fix a compact $E \in \cK^n$, and write it as a closed interval without a finite union of maximal open gaps as in \eqref{eq:Ecpct}:
\begin{align*}
E = [E_0^+,E_0^-] \setminus \bigcup_{j =1}^n (E_j^-, E_j^+).
\end{align*}
Associated to the compact $E$ is a degree $2(n+1)$ polynomial $Q$ given by
\begin{align}
\label{eq:qe}
Q_E(z) = \prod_{j=0}^n (z-E_j^+)(z-E_j^-)
\end{align}
It is not hard to see that this polynomial is positive in $\bbR \setminus E$.

The following is classical, but we reproduce a proof (found in this form in \cite{SSW01}) for the sake of completeness:
\begin{prop}
\label{pr:lindep}
Fix a compact $E \in \cK^n$.  There exists a unique monic critical polynomial $P_E(z)$ of degree $n$ so that
\begin{align}
\label{eq:critpoly}
\int_{E_j^-}^{E_j^+} \frac{P_E(x)}{\sqrt{Q_E(x)}}dx = 0, \; \; j = 1, 2, ..., n.
\end{align}
If we write
\begin{align*}
P_E(z) = z^n - c_1z^{n-1} - ... - c_{n-1}z - c_n,
\end{align*}
then the vector $\vec{c}$ with $(\vec{c})_j = c_j$ is the unique solution to the linear system $A\vec{c} = \vec{b}$, where
\begin{align*}
(A)_{jk} &= \int_{E_j^-}^{E_j^+} \frac{x^{n-k}}{\sqrt{Q_E(x)}}dx \\
(\vec{b})_j &= \int_{E_j^-}^{E_j^+} \frac{x^n}{\sqrt{Q_E(x)}}dx.
\end{align*}
\end{prop}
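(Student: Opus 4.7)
The plan is to reduce the existence and uniqueness of $P_E$ to the invertibility of $A$, then establish invertibility by a zero-counting argument that exploits the positivity of the weight $1/\sqrt{Q_E}$ on the spectral gaps.

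First, expanding $P_E(z) = z^n - \sum_{k=1}^n c_k z^{n-k}$ and substituting into \eqref{eq:critpoly}, linearity of the integral converts the vanishing conditions into precisely the linear system $A\vec{c} = \vec{b}$ stated in the proposition. Consequently, the assertion reduces to showing that $A$ is nonsingular, that is, that $A\vec{c} = 0$ forces $\vec{c} = 0$.

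Next, I would establish that $Q_E(x) > 0$ for every $x$ lying in any gap $(E_k^-, E_k^+)$. The ordering $E_0^+ < E_1^- < E_1^+ < \cdots < E_n^- < E_n^+ < E_0^-$ shows that in the factorization \eqref{eq:qe}, the single factor $(x - E_k^-)(x - E_k^+)$ is negative, while every other factor $(x - E_j^-)(x - E_j^+)$ (including $j = 0$, since $E_0^+ < x < E_0^-$) is positive. Hence $1/\sqrt{Q_E(x)}$ is a strictly positive, continuous weight on each gap.

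Finally, suppose for contradiction $A\vec{c} = 0$ with $\vec{c} \neq 0$. Then the nonzero polynomial $P(x) := \sum_{k=1}^n c_k x^{n-k}$, of degree at most $n-1$, satisfies $\int_{E_j^-}^{E_j^+} P(x)/\sqrt{Q_E(x)}\, dx = 0$ for each $j = 1, \ldots, n$. Since the weight is strictly positive on each gap and $P$ cannot vanish identically on any nondegenerate interval, the continuous integrand must change sign within each of the $n$ gaps, forcing $P$ to possess at least one real zero in each gap and hence at least $n$ distinct real zeros in all. This contradicts $\deg P \leq n-1$, so $A$ is nonsingular. The only step requiring genuine care is the sign computation for $Q_E$ on the gaps; once that is in place, the remainder is a direct application of the intermediate value theorem and the fundamental theorem of algebra.
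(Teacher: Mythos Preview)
Your approach is essentially identical to the paper's: reduce to nonsingularity of $A$, then use positivity of $Q_E$ on the gaps to force a nonzero polynomial of degree at most $n-1$ to have $n$ distinct zeros. One small correction to your sign computation: on the gap $(E_k^-, E_k^+)$ the factor $(x - E_0^+)(x - E_0^-)$ is \emph{negative}, not positive, since $E_0^+ < x < E_0^-$; together with the negative factor $(x - E_k^+)(x - E_k^-)$ this gives exactly two negative factors, so the product $Q_E(x)$ is indeed positive as claimed, and the remainder of your argument goes through unchanged.
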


\begin{proof}
By definition, if there exists a unique $\vec{c}$ solving $A\vec{c} = \vec{b}$ as claimed, then the critical polynomial exists.  Thus, it suffices to show the linear equation has a unique solution, i.e. that the matrix $A$ is non-singular.

Suppose otherwise.  Then there exists a nonzero polynomial $P$ of degree at most $n-1$ so that \eqref{eq:critpoly} holds.  Since $Q$ is positive on each gap $(E_j^-, E_j^+)$, $P$ must change sign on the interior of each gap $(E_j^-, E_j^+)$, $j = 1, 2, ..., n$.  But then $P$ has at least $n$ zeroes, contradicting our assumption on its degree.
\end{proof}

Via a straightforward calculus exercise, it isn't hard to check that

\begin{lem}
\label{lem:intanalytic}
The value
\begin{align*}
I_k(a) := \int_0^a \frac{x^k}{\sqrt{x(a-x)}}dx
\end{align*}
is given by
\begin{align*}
I_k(a) &= \frac{\Gamma(k+1/2)}{\Gamma(k+1)}\sqrt{\pi} a^k
\end{align*}
where $\Gamma$ is the typical gamma function.  In particular, $I_k(a)$ is analytic in $a$.
\end{lem}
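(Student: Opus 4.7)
The plan is to evaluate $I_k(a)$ by a single change of variables that separates out the $a$-dependence, recognize the resulting integral as a Beta function, and then read off analyticity from the explicit closed form.

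First I would substitute $x = au$ with $u \in [0,1]$, $dx = a\,du$. Pulling powers of $a$ out front gives
\begin{align*}
I_k(a) = \int_0^a \frac{x^{k-1/2}}{\sqrt{a-x}}\,dx = a^k \int_0^1 \frac{u^{k-1/2}}{\sqrt{1-u}}\,du.
\end{align*}
The remaining integral is the Euler Beta function
\begin{align*}
B(k+\tfrac{1}{2},\tfrac{1}{2}) = \int_0^1 u^{(k+1/2)-1}(1-u)^{1/2-1}\,du = \frac{\Gamma(k+1/2)\,\Gamma(1/2)}{\Gamma(k+1)}.
\end{align*}
Using $\Gamma(1/2)=\sqrt{\pi}$ yields $I_k(a) = \sqrt{\pi}\,\Gamma(k+1/2)\,\Gamma(k+1)^{-1}\,a^k$, which is the claimed formula.

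For the analyticity statement, since $k$ is a nonnegative integer in every application in this paper, the right-hand side is a monomial in $a$, hence entire; in any case the expression exhibits $I_k$ as a constant times $a^k$ on $a>0$, which is manifestly real-analytic. The whole argument is a routine calculus exercise, so I do not anticipate any real obstacle — the only care needed is at the endpoints $x=0$ and $x=a$, where the integrand has integrable singularities of order $-1/2$, justifying the use of Fubini/substitution without issue.
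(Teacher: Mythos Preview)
Your proof is correct and is precisely the ``straightforward calculus exercise'' the paper alludes to without giving details: the substitution $x=au$ reduces the integral to the Beta function $B(k+\tfrac12,\tfrac12)$, from which the closed form and analyticity follow immediately.
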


With this in hand, one has that

\begin{lem}
\label{lem:coeffsanalytic}
The coefficients $c_j$ above depend analytically on the gap edges of $E$.
\end{lem}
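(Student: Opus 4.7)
The plan is to extract analyticity of $\vec c$ from analyticity of the matrix $A$ and the vector $\vec b$ via Cramer's rule. By Proposition \ref{pr:lindep}, $A$ is invertible, so each coefficient $c_j$ is a rational function in the entries of $A$ and $\vec b$ with nonvanishing denominator $\det(A)$. It therefore suffices to prove that every integral of the form $\int_{E_j^-}^{E_j^+} x^l/\sqrt{Q_E(x)}\, dx$ (for $j=1,\ldots,n$ and $0\leq l \leq n$) depends analytically on the ordered tuple of gap edges $(E_0^+, E_1^-, \ldots, E_0^-) \in T^{2(n+1)}$.

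The integrand has integrable square-root singularities at the endpoints $E_j^\pm$, precisely the feature Lemma \ref{lem:intanalytic} was designed to address in the prototype $Q_E(x) = x(a-x)$. To reduce to that prototype, I would factor $Q_E(x) = (x - E_j^-)(E_j^+ - x)\, R_j(x)$ on the gap $(E_j^-, E_j^+)$, where $R_j$ collects the remaining $2n$ linear factors of \eqref{eq:qe}. Since $Q_E > 0$ on $\bbR \setminus E$ while $(x-E_j^-)(E_j^+-x) > 0$ on the open gap (and a brief sign check at $x = E_j^\pm$ handles the endpoints), $R_j$ is strictly positive on a neighborhood of $[E_j^-, E_j^+]$, with coefficients depending polynomially on the remaining gap edges. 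The substitution $x = m_j + h_j \sin\theta$, with $m_j = (E_j^- + E_j^+)/2$ and $h_j = (E_j^+ - E_j^-)/2$, exactly kills the square-root singularity and transforms the integral into
\[
\int_{-\pi/2}^{\pi/2} \frac{(m_j + h_j \sin\theta)^l}{\sqrt{R_j(m_j + h_j \sin\theta)}}\, d\theta,
\]
whose integrand is smooth on the fixed compact interval $[-\pi/2, \pi/2]$ and jointly real-analytic in $\theta$ and the gap-edge parameters.

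The main technical point, and the only step beyond routine bookkeeping, is confirming that joint real-analyticity of the integrand persists after integration. This is standard once one knows $R_j$ stays uniformly bounded below by a positive constant for all gap edges in a small neighborhood of any base point — a fact inherited from strict positivity of $R_j$ on the compact set $[E_j^-, E_j^+]$ together with continuity of $R_j$ in its coefficients. Either termwise integration of the convergent multivariable power-series expansion of the integrand, or an equivalent differentiation-under-the-integral-sign argument, then delivers analyticity of each entry of $A$ and $\vec b$; Cramer's rule closes out the argument.
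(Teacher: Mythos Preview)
Your proof is correct and follows essentially the same approach as the paper: both reduce via Cramer's rule to analyticity of the individual integrals, factor $Q_E$ into the singular piece $(x-E_j^-)(E_j^+-x)$ times a smooth positive remainder, and then argue that the resulting integral depends analytically on the edges. The only technical difference is that the paper expands the smooth remainder as a power series and invokes Lemma~\ref{lem:intanalytic} term by term, whereas your trigonometric substitution $x = m_j + h_j\sin\theta$ resolves the endpoint singularity in one stroke and lands directly on a fixed-domain integral with jointly analytic integrand; this is a slightly more streamlined packaging of the same idea.
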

\begin{proof}
By nonsingularity of the matrix $A$, if we show that terms like
\begin{align}
\label{eq:moment}
m_{j}(E) := \int_{E_j^-}^{E_j^+} \frac{x^n}{\sqrt{Q_E(x)}}dx
\end{align}
are analytic in each $E_k^\pm$, we are done, because the $c_j$ are linear combinations of entries of $A^{-1}$ and $\vec{b}$ above.  Since $Q_E$ is positive and bounded away from zero on $(E_j^-, E_j^+)$ when varying $E_k^\pm$, $k \neq j$, \eqref{eq:moment} is clearly analytic for all except $E_j^\pm$.  By symmetry in $\pm$, it suffices to prove that the terms \eqref{eq:moment} are analytic in $E_j^+$.

Fix a positive integer $n \geq 1$, and denote by $U$ the following open subset of $\bbR^{2(n+1)+1}$:
\begin{align*}
U = \left\{ (v, t) \in T^{2(n+1)} \times \bbR : t \in \bigcup_{j=1}^{n}(v_{2j-1}, v_{2j}) \right\}
\end{align*}
Define a function $f_j : U \to \bbR$ by
\begin{align*}
f_j(E,x) := \frac{x^n}{\sqrt{\left|\prod_{k \neq j}(E_k^+ - x)(E_k^- - x) \right|}}.
\end{align*}
This function is analytic for $(E, x) \in B_\delta(E) \times (E_j^- - \delta, E_j^+ + \delta)$ for $\delta$ smaller than the shortest band and gap lengths.  In this region, we expand
\begin{align*}
f_j(E,x) &= \sum_{k = 0}^\infty a_k^j(E)x^k.
\end{align*}
where the coefficients $a_k^j(E)$ are analytic in each $E_i^\pm$, $i \neq j$.  Then we have
\begin{align*}
m_{j}(E) &= \int_{E_j^-}^{E_j^+} \frac{f_j(E,x)}{\sqrt{(E_j^+-x)(x-E_j^-)}}dx \\
&= \sum_{k = 1}^\infty a_k^j(E)\int_{E_j^-}^{E_j^+} \frac{x^k}{\sqrt{(E_j^+ -x)(E_j^- - x)}} dx \\
&= \sum_{\vec{m} \in [1,2(n+1)]^n} c_{\vec{m}}E^{\vec{m}},
\end{align*}
with multi-index notation in the last equality.  Above, the first equality is by definition, the second by analytic expansion of $f_j$ and the Fubini theorem, and the last by Lemma \ref{lem:intanalytic} and analyticity of the coefficients $a_k^j(E)$.
\end{proof}

Thus, for each compact $E \subset \bbR$, we find an associated probability measure given by
\begin{align}
\label{eq:eqmeas}
d\rho_E(x) = \frac{1}{\pi}\sum_{j=0}^n \frac{|P_E(x)|}{\sqrt{|Q_E(x)|}}\chi_{[E_j^+,E_{j+1}^-]}(x)dx
\end{align}
where here we interpret $E_{n+1}^- := E_0^-$.  In fact, \eqref{eq:eqmeas} is the equilibrium measure for $E$ \cite{WID69}; that is, $d\rho_E$ is the unique probability measure $\mu$ supported on $E$ minimizing
\begin{align*}
\cE(\mu) = \int_E \int_E \log(|x-y|^{-1})d\mu(x)d\mu(y).
\end{align*} 
For finite-gap sets of the form \eqref{eq:Ecpct} with non-degenerate bands, recall that the equilibrium measure agrees with harmonic measure (e.g., \cite[Theorem 4.3.14]{RAN95}).  Thus, we may equivalently define the frequency vector $\omega \in \bbR^n$ from Theorem \ref{thm:sy} by
\begin{align*}
\omega_j = \rho_E((-\infty, E_j^-]), \;\; j = 1, 2, \cdots, n.
\end{align*}
Because the pushforward of harmonic measure via a conformal bijection is the harmonic measure, the frequency vector $\omega \in \bbR^n$ is scaling-invariant; that is, 
\begin{align}
\label{eq:freqscale}
\omega(\alpha E) = \omega(E), \;\; \alpha > 0.
\end{align}
One could likewise check this fact by a straightforward calculation using Proposition \ref{pr:lindep} and \eqref{eq:eqmeas}.

Fix a number $n \in \bbN$, and denote
\begin{align*}
T^n = \{(x_1, \cdots , x_{n}) \in \bbR^n : x_1 < x_2 < \cdots < x_{n} \}.
\end{align*}  
Any compact $E \in \cK^n$ written \eqref{eq:Ecpct} can be parametrized by a vector in $T^{2(n+1)}$ by reading off endpoints from left to right:
\begin{align*}
E \mapsto (E_0^+, E_1^-, E_1^+, E_2^-,\cdots, E_n^+, E_0^-)^\top
\end{align*}
We identify $\cK^n$ to $T^{2(n+1)}$ in this way, and note that $\cK^n$ can be naturally viewed as a $\bbR_+$ bundle over $\cK_1^n$:
\begin{prop}
\label{pr:homeo}
Fix an $\alpha > 0$.  The space $\cK^n$ is homeomorphic to $\cK^n_\alpha \times \bbR_+$.
\end{prop}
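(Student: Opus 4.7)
The plan is to construct the desired homeomorphism explicitly using the scaling action of $\bbR_+$ on compact subsets of $\bbR$, combined with the linear scaling of capacity from \eqref{eq:capscale}. Specifically, I would define
\[
\Phi : \cK^n \to \cK^n_\alpha \times \bbR_+, \qquad \Phi(E) = \left(\frac{\alpha}{\capa(E)} E, \, \frac{\capa(E)}{\alpha}\right),
\]
and
\[
\Psi : \cK^n_\alpha \times \bbR_+ \to \cK^n, \qquad \Psi(F, t) = tF.
\]
Positive scaling preserves the ordered gap-edge structure and sends $n$-gap sets to $n$-gap sets; by \eqref{eq:capscale}, $\Phi$ takes values in $\cK^n_\alpha \times \bbR_+$, and one verifies $\Psi\circ\Phi = \mathrm{id}$ and $\Phi\circ\Psi = \mathrm{id}$ directly.

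The remaining content is checking bicontinuity. Continuity of $\Psi$ is essentially trivial: under the identification $\cK^n \hookrightarrow T^{2(n+1)}$ by ordered gap edges, the action $E \mapsto tE$ is coordinatewise multiplication by $t$ on the parametrizing vector, which is jointly continuous in $(F,t)$. Continuity of $\Phi$ reduces, by inspection of its coordinate functions, to continuity of $\capa : \cK^n \to \bbR_+$ in the gap-edge topology.

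The main step, then, is continuity of the capacity map, which I would deduce from the material already developed in this section. For $E \in \cK^n$ the equilibrium measure takes the explicit form \eqref{eq:eqmeas}, where the polynomial $P_E$ has coefficients depending analytically on the gap edges (by Lemma \ref{lem:coeffsanalytic}) and $Q_E$ is polynomial in the gap edges by \eqref{eq:qe}. Plugging this form into $-\log\capa(E) = \int_E\int_E \log|x-y|^{-1}\, d\rho_E(x)\,d\rho_E(y)$ and splitting into the finite sum of band-by-band double integrals, each integrand depends continuously on the gap edges (the logarithmic singularity being locally integrable), and each limit of integration is itself a gap edge. A routine dominated-convergence argument then gives continuity of $\capa$.

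The main obstacle is precisely this continuity of capacity; the rest is bookkeeping with the scaling action. An alternative route would be to invoke a general theorem on continuity of logarithmic capacity under Hausdorff convergence of compacts of uniformly positive capacity, but the direct approach through \eqref{eq:eqmeas} and Lemma \ref{lem:coeffsanalytic} is most in keeping with the surrounding exposition and avoids importing outside machinery.
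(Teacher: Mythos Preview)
Your proposal is correct and is essentially identical to the paper's proof: the paper defines the same scaling map (with the cosmetically different second coordinate $\capa(E)$ in place of your $\capa(E)/\alpha$), observes that the inverse is scalar multiplication and hence continuous, and reduces continuity of the forward map to continuity of $\capa$ via Proposition~\ref{pr:lindep} and \eqref{eq:eqmeas}. Your write-up simply spells out the dominated-convergence step a bit more explicitly than the paper does.
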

\begin{proof}
Consider the map $h_\alpha : \cK^n \to \cK^n_\alpha \times \bbR_+$ given by
\begin{align*}
h_\alpha(E) := \left( \frac{\alpha}{\capa(E)} E, \;\;\capa(E) \right).
\end{align*}
By \eqref{eq:capscale}, $h_\alpha$ is a bijection, with continuous inverse
\begin{align*}
h_\alpha^{-1}(E, \beta) = \frac{\beta}{\alpha} E.
\end{align*}
The capacity $\capa(E)$ is continuous and positive on $\cK^n$ by Proposition \ref{pr:lindep} and \eqref{eq:eqmeas}.  Thus, $h_\alpha$ is likewise continuous, and $h_\alpha$ is a homeomorphism, as claimed.
\end{proof}

Define a map $\omega : \cK^n \to T^n$ which outputs the frequency vector $\omega(E) \in T^n$ of a compact $E \in \cK^n$.   By \eqref{eq:freqscale}, this map is constant on the $\bbR_+$ fibres over $\cK_\alpha^n$.  We wish to investigate the ``typicality" of rational independence of the frequency vector with respect to the input compact $E$.  With this in mind, we explore some properties of the map $\omega$:

\begin{prop}
\label{pr:omcts}
The map $\omega: \cK^n \to T^n$ is real analytic.  Furthermore, if $B \subset \cK^n$ has measure zero, $\omega^{-1}(B)$ likewise has measure zero.
\end{prop}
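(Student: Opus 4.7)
I plan to prove the two assertions separately. Analyticity follows by extending the technique of Lemma \ref{lem:coeffsanalytic}, and the null-set assertion reduces via a standard submersion argument to showing that $D\omega$ attains maximal rank $n$ somewhere on $\cK^n$.

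For analyticity, formula \eqref{eq:eqmeas} gives
\begin{align*}
\omega_j(E) \;=\; \sum_{k=0}^{j-1} \int_{E_k^+}^{E_{k+1}^-} \frac{|P_E(x)|}{\pi\sqrt{|Q_E(x)|}}\,dx
\end{align*}
(with the band-endpoint conventions of the paper). On each band the integrand has only integrable square-root singularities at the endpoints; factoring out $\sqrt{(E_{k+1}^- - x)(x - E_k^+)}$ leaves a factor that is jointly real analytic in $(E,x)$ on a complex neighborhood of the band, by Lemma \ref{lem:coeffsanalytic} for the coefficients of $P_E$ and the fact that the remaining roots of $Q_E$ stay bounded away from this band. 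Taylor expanding this regular factor about the midpoint of the band and integrating termwise against $1/\sqrt{(E_{k+1}^- - x)(x - E_k^+)}$ via Lemma \ref{lem:intanalytic} then expresses each $\omega_j$ as a convergent power series in the $2(n+1)$ gap-edge coordinates, with radius of convergence controlled by the minimum band and gap widths.

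For the null-set statement, let $\Sigma := \{E \in \cK^n : \operatorname{rank}(D\omega(E)) < n\}$, which is a real analytic subvariety of the connected real analytic manifold $\cK^n$ cut out by the simultaneous vanishing of all $n\times n$ minors of $D\omega$. Hence either $\Sigma = \cK^n$ or $\Sigma$ is proper and therefore Lebesgue null. Granting the latter, on the open complement $\cK^n \setminus \Sigma$ the submersion theorem provides local charts in which $\omega$ becomes the standard projection $\bbR^{2(n+1)} \to \bbR^n$; preimages of null $B \subset T^n$ are locally of the form $B \times \bbR^{n+2}$, which is null in $\bbR^{2(n+1)}$, and a countable covering of $\cK^n \setminus \Sigma$ (Lindel\"of) globalizes.

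The remaining task — and, in my view, the main obstacle — is ruling out the degenerate case $\Sigma = \cK^n$. I would handle this either by invoking inverse spectral theory for finite-gap Jacobi operators (see, e.g., \cite{GHMT08, SY97}), which constructs for every $\omega^*$ in an open subset of $T^n$ an $E \in \cK^n$ with $\omega(E) = \omega^*$ so that $\omega(\cK^n)$ has nonempty interior in $T^n$ and, by the constant-rank theorem, $D\omega$ must have rank $n$ somewhere; or by computing $D\omega$ directly at a symmetric configuration (e.g. one invariant under $x \mapsto -x$) via Hadamard-style variational formulas for the equilibrium density \eqref{eq:eqmeas}, and checking rank $n$ by hand. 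The second route is more self-contained but also the more laborious; I would likely adopt the inverse-spectral approach unless a clean explicit computation presents itself.
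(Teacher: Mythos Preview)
Your proposal is correct and follows essentially the same route as the paper: analyticity via the factor-out-and-expand trick of Lemma~\ref{lem:coeffsanalytic}, and the null-set claim via showing $\omega$ is a submersion off a real-analytic (hence null) subvariety, with the degenerate case $\Sigma=\cK^n$ excluded because the image of $\omega$ has positive measure. The only quibbles are cosmetic: the paper invokes Sard's theorem rather than the constant-rank theorem (the rank need not be constant on $\Sigma$, so Sard is the cleaner citation), and it cites \cite{EY12, TOT09} directly for the fact that bands of $n$-gap compacts can carry arbitrary harmonic measures, which is more on-point than the inverse-spectral references you propose.
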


\begin{proof}
We begin by proving analyticity.

Recall the open set $U$ defined above, and denote by $g_j(E,t) : U \to \bbR$ the function
\begin{align*}
g_j(E,t) = \frac{|P_E(t)|}{\sqrt{\left|\prod_{k \neq j} (t - E_{k-1}^+)(t-E_k^-) \right|}}
\end{align*}
By Lemma \ref{lem:coeffsanalytic} and because the zeros of $P_E$ lie in the gaps of $E$, for each $E \in \cK^n$ there exists a value $\delta > 0$ so that $g_j(E, t)$ is bounded and analytic on $B_\delta(E) \times (E_{j-1}^+-\delta, E_j^- +\delta)$.  Notice that, as defined, we have
\begin{align*}
\omega(E)_j = \frac{1}{\pi}\sum_{k \leq j} \int_{E_{k-1}^+}^{E_k^-} \frac{g_k(E,t)}{\sqrt{(t-E_{k-1}^+)(E_k^- - t)}}dt.
\end{align*}
Each term in this sum is analytic by an argument exactly analogous to that in the proof of Lemma \ref{lem:coeffsanalytic} above.  Consequently, $\omega : \cK^n \to T^n$ is analytic, as claimed.

Consider now the Jacobian $D\omega$ of $\omega$. By Sard's theorem, the critical set $X \subset \cK^n$ of values for which the Jacobian is not surjective has zero measure image under $\omega$, that is, $|\omega(X)| = 0$.

Define a new analytic function $g : \cK^n \to \bbR$ given by
\begin{align*}
g(E) = \sum_{\vec{m} \in [1, 2(n+1)]^n} \det((D\omega(E))_{\vec{m}})^2
\end{align*}
where $(D\omega)_{\vec{m}}$ is the matrix formed by the $n$ columns of $(D\omega)$ indexed by $\vec{m}$.  The Jacobian is surjective (and thus $\omega$ a submersion) for all those $E$ for which $g(E) \neq 0$.  By analyticity of $g$, either $\{ E : g(E) = 0\}$ is (Lebesgue) measure zero in $\cK^n$ or $g$ is constantly zero.  If $g$ is constantly zero, then the set of critical points $X$ is the whole domain $\cK^n$.  Thus, if $g$ were constantly zero, then the image of $\omega$ must be of zero measure.  But this is certainly false; the bands of $n$-gap compacts can have arbitrary harmonic measures \cite{EY12, TOT09}.  

Thus, $\{ E : g(E) = 0\}$ has (Lebesgue) measure zero, and $\omega$ is almost-everywhere a submersion.  It follows that the preimage of a measure zero set has measure zero \cite{PON87}.
\end{proof}

Let $q \in \bbZ^n$, $k \in \bbZ$, and denote $B_{q,k} = \{v \in \bbR^n : q \cdot v = k\} \cap T^n$.  Of course, $B_{q,k}$ is a translation of a closed proper subspace of $T^n$, and thus is nowhere dense.  Denote by $B = \bigcup_{q \in \bbZ^n}\bigcup_{k \in \bbZ} B_{q,k}$.  This set is a countable union of nowhere dense sets, i.e. a meager set.  We claim that 
\begin{lem}
\label{lem:nwdense}
$\omega^{-1}(B_{q,k})$ is nowhere dense and has measure zero.
\end{lem}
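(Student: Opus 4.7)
The plan is to handle the two claims—measure zero and nowhere dense—via two separate mechanisms, both powered by the real-analyticity of $\omega$ established in Proposition \ref{pr:omcts}.

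For measure zero, I would simply invoke the second half of Proposition \ref{pr:omcts}. The set $B_{q,k}$ is the intersection of a proper affine hyperplane in $\bbR^n$ with the open set $T^n$, hence has Lebesgue measure zero in $T^n$. The cited preimage statement then yields that $\omega^{-1}(B_{q,k})$ has measure zero in $\cK^n$.

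For nowhere dense, the first observation is that $B_{q,k}$ is closed in $T^n$ (its complement in $T^n$ is the intersection of $T^n$ with the open set $\bbR^n\setminus\{q\cdot v = k\}$), so by continuity $\omega^{-1}(B_{q,k})$ is closed in $\cK^n$; it therefore suffices to show it has empty interior. The plan is to argue by contradiction: suppose $\omega^{-1}(B_{q,k})$ contains a nonempty open set $U\subset \cK^n$. Define the real-analytic function $h:\cK^n\to\bbR$ by $h(E) = q\cdot \omega(E) - k$; by Proposition \ref{pr:omcts}, $h$ is real analytic, and by assumption $h\equiv 0$ on $U$. Since $\cK^n$ is identified with $T^{2(n+1)}$, which is an open convex (hence connected) subset of $\bbR^{2(n+1)}$ cut out by the linear inequalities $x_1<x_2<\cdots<x_{2(n+1)}$, the identity principle for real-analytic functions on a connected open set forces $h\equiv 0$ on all of $\cK^n$.

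The contradiction then comes from the submersion property extracted inside the proof of Proposition \ref{pr:omcts}. If $h\equiv 0$ on $\cK^n$, differentiating gives $q\cdot D\omega(E) = 0$ at every $E\in\cK^n$, so the nonzero row vector $q$ lies in the left kernel of the Jacobian $D\omega$ everywhere. This means $D\omega$ has rank at most $n-1$ at every point, so the function $g$ built in the proof of Proposition \ref{pr:omcts} from $n\times n$ minors of $D\omega$ is identically zero. But that case was already ruled out there (it would force $\omega(\cK^n)$ to have measure zero, contradicting that the harmonic measures of bands of $n$-gap compacts can be arbitrary, cf.\ \cite{EY12, TOT09}). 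This contradiction shows $\omega^{-1}(B_{q,k})$ has empty interior, completing the proof.

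The only step with any real content is the empty-interior argument, and within it the only subtlety is the appeal to connectedness of $\cK^n$; everything else is bookkeeping. I do not expect any obstacle, since the analytic framework and the submersion fact are already in place from Proposition \ref{pr:omcts}.
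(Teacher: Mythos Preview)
Your proof is correct, but for the nowhere-dense claim you work harder than the paper does. The paper's argument is a one-liner: since $B_{q,k}$ is closed in $T^n$ and $\omega$ is continuous, $\omega^{-1}(B_{q,k})$ is closed; you have already shown it has measure zero via Proposition~\ref{pr:omcts}; and a closed subset of Euclidean space with measure zero is automatically nowhere dense (a nonempty interior would contain a ball of positive measure). Your route---assuming a nonempty interior, invoking the identity principle for the real-analytic function $h(E)=q\cdot\omega(E)-k$ on the connected open set $\cK^n\cong T^{2(n+1)}$, and then contradicting the almost-everywhere submersion fact from the proof of Proposition~\ref{pr:omcts}---is sound, but it re-derives something the measure-zero conclusion already hands you for free. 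The advantage of your argument is that it would still give empty interior even if one had not established the preimage-of-null-sets property; the advantage of the paper's is brevity and that it avoids reaching back into the internals of the proof of Proposition~\ref{pr:omcts}.
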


\begin{proof}
$B_{q,k}$ is a closed subset of $T^n$ of positive codimension, and thus of measure zero.  By continuity, $\omega^{-1}(B_{q,k})$ is likewise closed, and by Lemma \ref{pr:omcts} $\omega^{-1}(B_{q,k})$ has measure zero.  A closed, measure zero subset of Euclidean space is nowhere dense.
\end{proof}

We can now address the
\begin{proof}[Proof of Proposition \ref{pr:genratind}]
Consider the set $A \subset \cK^n$ of $n$-gap compacts whose frequency vectors are rationally dependent; that is, $A = \omega^{-1}(B)$.  That $A$ is meager follows from the definition and Lemma \ref{lem:nwdense}.  Because $B$ is a countable union of zero measure sets, $B$ likewise has measure zero.  By Proposition \ref{pr:omcts}, $\omega^{-1}(B)$ has measure zero in $\cK^n$.  Finally, because $\omega$ is fibre-wise constant, for each $\alpha > 0$,
\begin{align*}
\omega^{-1}(B) = (\omega^{-1}(B) \cap \cK^n_\alpha) \times \bbR_+,
\end{align*}
and it follows that $\omega^{-1}(B) \cap \cK^n_\alpha$ is meager and has measure zero under the pushforward measure induced by $h_\alpha$.
\end{proof}

\section{The Toda Hierarchy}

Recall the definition of the Toda hierarchy from Section 2.  Namely, for a polynomial $P$ of degree $n+1 \geq 1$, recall that the $n^{th}$ Toda flow (for $P$) is the integral curve of Jacobi operators satisfying the Lax pair \eqref{eq:laxpair}
\begin{align*}
\partial_t J &= [P(J)^+ - P(J)^-, J].
\end{align*} 
When there exists a monic polynomial $P$ and Jacobi operator $J$ such that $P(J)^+ - P(J)^-$ and $J$ commute,
\begin{align*}
[P(J)^+ - P(J)^-, J] = 0,
\end{align*}
we say that $J$ is stationary for $P$.

The Toda flow exists uniquely for any bounded Jacobi initial condition $J_0$ \cite[Theorem 12.6]{TES00}.  Denote this flow by $\cT_P(t)$, that is, the Jacobi matrix $\cT_P(t) J_0$ should solve the Lax pair \eqref{eq:laxpair} with $\cT_P(0) J_0 = J_0$.  An important fact about the Toda flow which we did not mention above is the following:
\begin{prop}[Theorem 12.5, \cite{TES00}]
\label{pr:todauniteq}
Fix a polynomial $P$.  Then $\cT_P(t)J_0$ is unitarily equivalent to $J_0$ for all $t \in \bbR$.
\end{prop}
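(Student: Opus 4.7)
The plan is to follow the classical Lax pair argument: construct a unitary-valued propagator $U(t)$ on $\ell^2(\bbZ)$ such that $\cT_P(t) J_0 = U(t) J_0 U(t)^*$. Write $J(t) := \cT_P(t) J_0$ and $M(t) := P(J(t))^+ - P(J(t))^-$, so the Lax pair \eqref{eq:laxpair} reads $\partial_t J(t) = [M(t), J(t)]$.

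The first step is the algebraic observation that $M(t)$ is skew-adjoint. Since $P(J(t))$ is self-adjoint, the ``positive'' and ``negative'' parts appearing in the decomposition of $P(J)$ are adjoints of one another up to sign, so $M(t)^* = -M(t)$. Boundedness of $J(t)$ (from the existence theory cited as Theorem 12.6 of \cite{TES00}) and continuity of the Toda flow in $t$ ensure that $M(t)$ is a bounded, norm-continuous, skew-adjoint family of operators in $\cB(\ell^2(\bbZ))$.

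Next I would solve the operator-valued linear ODE $\partial_t U(t) = M(t) U(t)$ with $U(0) = I$ in $\cB(\ell^2(\bbZ))$. Because $M$ is bounded and continuous in $t$, Picard iteration produces a unique global solution $U(t)$. Unitarity follows from differentiating $U^*U$: using skew-adjointness of $M$,
\begin{align*}
\partial_t(U^* U) = (MU)^* U + U^*(MU) = U^*(M^* + M) U = 0,
\end{align*}
so $U(t)^* U(t) = I$; the symmetric computation gives $U(t) U(t)^* = I$.

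Finally, set $K(t) := U(t)^* J(t) U(t)$ and compute, using $\partial_t U = MU$ and $\partial_t U^* = -U^* M$,
\begin{align*}
\partial_t K &= U^*(\partial_t J) U + (\partial_t U^*) J U + U^* J (\partial_t U) \\
&= U^*[M,J]U - U^* M J U + U^* J M U = 0.
\end{align*}
Since $K(0) = J_0$, we conclude $K(t) \equiv J_0$, i.e., $J(t) = U(t) J_0 U(t)^*$. The main obstacle is really the first step: checking that the precise decomposition $P(J) = P(J)^+ + P(J)^-$ used in defining the Toda flow makes $M$ skew-adjoint. Once that algebraic point is verified, the remainder is a routine exercise in manipulating operator-valued ODEs; the entire argument is insensitive to $P$ beyond its appearance in $M$.
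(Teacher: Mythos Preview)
The paper does not actually prove this proposition; it is simply quoted from \cite{TES00} with a citation and no argument, so there is no ``paper's own proof'' to compare against. Your argument is exactly the classical Lax-pair proof that Teschl gives: build a unitary propagator $U(t)$ from the skew-adjoint generator $M(t)$ and verify that $U(t)^* J(t) U(t)$ is constant.

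One genuine caution about the step you flagged. Taken literally, the paper's description of $A^\pm$ as ``restrictions of $A$ to $\ell^2(\bbZ_\pm)$'' would make $A^+$ and $A^-$ half-line compressions of a self-adjoint operator, hence each self-adjoint, and then $A^+ - A^-$ would be self-adjoint rather than skew. What is actually meant (and what Teschl uses) is the decomposition of the banded matrix $P(J)$ into its strictly upper- and strictly lower-triangular parts; for a self-adjoint $A$ one then has $(A^+)^* = A^-$, so $M = A^+ - A^-$ is skew-adjoint as you need. You were right to single this out as the only substantive point; once the notation is read in the upper/lower-triangular sense, the rest of your ODE manipulation is correct and complete.
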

Thus, we see that the flow $\cT_P$ preserves spectral properties.  In fact, this flow also preserves the reflectionless condition \cite[Corollary 1.3]{PR11}, and thus, for fixed polynomial $P$, descends as a continuous-time dynamical system to $\cJ(E)$.  Amazingly, under the assumption that $E$ is homogeneous, the homeomorphism from Theorem \ref{thm:sy}, which linearized the shift dynamics, simultaneously linearizes the Toda flow.
\begin{thm}[Theorem 1.3, \cite{VY02}]
\label{thm:vy}
Suppose $E$ is a homogeneous compact with $g$ gaps, $0 \leq g \leq \infty$, and let $P$ be a polynomial.  Then the dynamical system $(\cJ(E), \cS, \cT_P(t))$ is conjugate to $((\bbR/\bbZ)^g,T_\omega, T_{t\xi_P})$, where $T_{t\xi_P}(\alpha) = \alpha + t\xi_P$, $\xi_P \in \bbR^g$.
\end{thm}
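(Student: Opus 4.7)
The plan is to transport the Toda flow through the (Sodin-Yuditskii extension of the) conjugation $\Phi : \cJ(E) \to (\bbR/\bbZ)^g$ of Theorem \ref{thm:sy} to the homogeneous regime, valid also for $g = \infty$. Once $\Phi$ is in hand with $\Phi \circ \cS = T_\omega \circ \Phi$, the strategy is to show that the image flow $\widetilde{\cT}_P(t) := \Phi \circ \cT_P(t) \circ \Phi^{-1}$ on the torus commutes with $T_\omega$, and then exploit density of the $T_\omega$-orbit to force it to be a translation.

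First I would verify that $\cT_P(t)$ and $\cS$ commute on $\cJ(E)$. Given $J_0 \in \cJ(E)$, both $\cS(\cT_P(t) J_0)$ and $\cT_P(t)(\cS J_0)$ are integral curves of the Lax equation \eqref{eq:laxpair} with common initial datum $SJ_0S^{-1}$, so by uniqueness they coincide. The only subtlety is that $S A^\pm S^{-1} \ne (SAS^{-1})^\pm$; however, the resulting boundary discrepancy is a finite-rank operator whose commutator with the Jacobi matrix vanishes by virtue of $[P(J), J] = 0$. (Alternatively, one may invoke the manifest translation-invariance of the explicit Toda-hierarchy ODEs on $(a_n, b_n)$.) Now $\widetilde{\cT}_P(t)$ is a continuous one-parameter group of self-homeomorphisms of $(\bbR/\bbZ)^g$ commuting with $T_\omega$. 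Assuming the frequencies are rationally independent (the generic case; the remaining case can be recovered by continuity, or handled directly via the classical Abel-Jacobi linearization of the Toda flow on the Jacobian), $\{n\omega\}_{n \in \bbZ}$ is dense, and continuity promotes commutation with each $T_{n\omega}$ to commutation with every $T_\alpha$. Any continuous self-map of $(\bbR/\bbZ)^g$ satisfying $T(x+\alpha) = T(x) + \alpha$ for all $\alpha$ is the translation $x \mapsto x + T(0)$; applying this to each $\widetilde{\cT}_P(t)$ yields $\widetilde{\cT}_P(t) = T_{\xi(t)}$. Since $\cT_P$ is a continuous one-parameter group, $t \mapsto \xi(t)$ is a continuous homomorphism $\bbR \to (\bbR/\bbZ)^g$, hence of the form $\xi(t) = t\xi_P$ for a unique $\xi_P \in \bbR^g$.

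The principal obstacle is the homogeneous extension of Theorem \ref{thm:sy} itself: for $g = \infty$, constructing the conjugating homeomorphism $\Phi$ onto the infinite-dimensional torus $(\bbR/\bbZ)^\infty$ and verifying the requisite dynamical properties of the shift requires the full Sodin-Yuditskii apparatus of character-automorphic functions on the infinite-genus Riemann surface $\hat{\bbC} \setminus E$ and associated Widom domain theory. A secondary technical point is tracking the boundary terms that arise when verifying shift-equivariance of the Lax-pair formulation of the Toda flow, where the intertwining $[P(J), J] = 0$ plays the decisive cancellation role.
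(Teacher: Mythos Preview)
The paper does not offer its own proof of this statement: Theorem~\ref{thm:vy} is quoted verbatim from \cite{VY02} (with the finite-gap antecedent attributed to \cite{GHMT08}) and is used as a black box. So there is no in-paper argument to compare against; the relevant question is whether your outline would actually establish the result.

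Your soft approach---show $\cT_P(t)$ commutes with $\cS$, push through $\Phi$, and use density of $\{n\omega\}$ to force the transported flow to be a translation---is clean and does succeed \emph{when the harmonic frequencies $\omega$ are rationally independent}. The gap is precisely the case where they are not. Your two proposed fixes do not close it. ``Recover by continuity'' would mean varying $E$ and taking a limit, but the conjugation $\Phi = \Phi_E$, the torus dimension $g$, and the flow direction $\xi_P$ all depend on $E$; there is no ambient space in which ``$\widetilde{\cT}_P(t)$ is a translation'' is a closed condition, and the paper itself shows that the rationally dependent $E$ form a meager, measure-zero set, so one cannot approach them along a path inside the independent locus without crossing strata. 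Your second fallback, ``handle directly via the classical Abel--Jacobi linearization,'' is exactly what \cite{VY02} and \cite{GHMT08} do: they write down the Abel map explicitly and verify by computation that the Toda flow becomes linear on the Jacobian, with no case distinction on $\omega$. Invoking that is not an alternative argument but the cited proof itself.

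A secondary point: your claim that the boundary discrepancy between $S(P(J)^+ - P(J)^-)S^{-1}$ and $P(SJS^{-1})^+ - P(SJS^{-1})^-$ has vanishing commutator with $J$ ``by virtue of $[P(J),J]=0$'' is not quite right as stated; the finite-rank error does not obviously commute with $J$. Your parenthetical alternative---that the explicit Toda ODEs on $(a_n,b_n)$ are manifestly translation-invariant---is the correct and standard way to see $\cS \circ \cT_P(t) = \cT_P(t) \circ \cS$, and you should lead with it rather than the Lax-pair bookkeeping.
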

We remark that this result was well-known in the finite-gap regime \cite{GHMT08}, but we state this more general result for completeness.

Fix a compact $E \in \cK^n$ and consider the polynomial $Q_E(z)$ defined as in \eqref{eq:qe} above:
\begin{align*}
Q_E(z) = \prod_{j = 0}^n (z-E_j^+)(z-E_j^-).
\end{align*} 
Apply the square root with the typical branching and expand $Q_E^{1/2}$ for $|z| > \max\{ |E_0^+|, |E_0^-|\}$ as
\begin{align}
\label{eq:cjdef}
Q_E^{1/2}(z) &= -z^{n+1}\sum_{j=0}^\infty c_j(E)z^{-j}.
\end{align}

We intend to address the proof of property \textit{(3)} from Proposition \ref{pr:rem1}, given the following statement from \cite{TES00}:

\begin{prop}[Corollary 12.10, \cite{TES00}]
\label{pr:tes1}
Let $J \in \cJ(E)$.  Then $J$ is stationary for a Toda polynomial
\begin{align*}
P(z) = z^{m+1} - c_1z^m - \cdots - c_m z
\end{align*} 
of degree $m+1$ if and only if there exists a polynomial $Q_{m-n}$ of degree $m-n$ such that $c_j = c_j (\tilde{E})$, where $Q_{\tilde{E}}$ is given by
\begin{align*}
Q_{\tilde{E}}(z) = Q_{m-n}^2(z)Q_E(z).
\end{align*}
\end{prop}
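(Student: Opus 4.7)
My plan is to identify the stationarity condition $[P(J)^+ - P(J)^-, J] = 0$ with an algebraic condition on a commuting pair of operators, and then match that condition with the expansion coefficients $c_j(\tilde{E})$ from \eqref{eq:cjdef} via asymptotic analysis at infinity. I would address the ``$\Leftarrow$'' direction first (which is constructive) and then the harder ``$\Rightarrow$'' direction via Burchnall--Chaundy.

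For sufficiency: first I would write out the entries of $[L, J]$ with $L = P(J)^+ - P(J)^-$, which yields a finite system of recursion relations on the $c_j$---the classical Lenard-type recursions for the Toda hierarchy. A direct computation shows that the sequence $\{c_j(E)\}_{j \geq 0}$ defined by \eqref{eq:cjdef} terminates this recursion at order $n$ for any $J \in \cJ(E)$; this is essentially the statement that the natural polynomial of degree $n+1$ built from $c_1(E), \ldots, c_n(E)$ makes every $J \in \cJ(E)$ stationary. I would then extend this to general $\tilde{E}$: if $Q_{\tilde{E}}^{1/2}(z) = Q_{m-n}(z) Q_E^{1/2}(z)$, multiplying the series for $Q_E^{1/2}$ by the coefficients of $Q_{m-n}$ yields $c_j(\tilde{E}) = \sum_{k+l = j} q_l c_k(E)$, and an induction in $j$ verifies that these convolved coefficients still satisfy the Lenard recursion---so the corresponding $P$ makes $J$ stationary.

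For necessity: given any $P$ for which $J$ is stationary, set $L = P(J)^+ - P(J)^-$ and note that $L$ is antisymmetric with $[L, J] = 0$. By Burchnall--Chaundy, the commuting pair $(J, iL)$ admits a joint spectral decomposition and satisfies an algebraic relation $L^2 = -R(J)$ for a polynomial $R$ of degree $2(m+1)$. The joint spectrum lies on the real locus of $y^2 = R(z)$, and since $J \in \cJ(E)$, the hyperelliptic spectral curve $y^2 = Q_E(z)$ embeds into this curve, yielding $Q_E \mid R$. I would then argue that the antisymmetry of $L$ together with the reflectionless property of $J$ on $E$ force the quotient $R/Q_E$ of degree $2(m-n)$ to be a perfect square $Q_{m-n}^2$---heuristically, any simple zero of $R$ outside $E$ would introduce a branch point on the spectral curve incompatible with the trivial monodromy encoded by reflectionlessness. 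Matching the leading asymptotics of $P(z)$ (extracted from $L$) against those of $-Q_{m-n}(z) Q_E(z)^{1/2}/2$ as $z \to \infty$ then recovers $c_j = c_j(\tilde{E})$ with $Q_{\tilde{E}} = Q_{m-n}^2 Q_E$.

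The hard part will be the perfect-square factorization $R/Q_E = Q_{m-n}^2$. Divisibility of $R$ by $Q_E$ follows readily from spectral containment, but ruling out simple zeros of $R/Q_E$ requires careful accounting of both the antisymmetry of $L$ and the absence of extraneous branch data for $J \in \cJ(E)$. I anticipate this step will require the full algebro-geometric apparatus of Teschl's Chapter 12---in particular, the explicit Baker--Akhiezer wavefunctions on the spectral curve, whose single-valuedness is essentially equivalent to the perfect-square condition.
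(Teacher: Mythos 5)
A preliminary but important point: the paper does not prove this proposition. It is imported verbatim as Corollary 12.10 of \cite{TES00} and used as a black box; the only thing Section 5 actually proves is the consequence recorded as Proposition \ref{pr:tes1res}. So there is no in-paper argument to compare yours against, and what you have written is a reconstruction of the proof in \cite[Ch.~12]{TES00}. Your overall architecture --- Lenard-type recursions for the ``if'' direction, Burchnall--Chaundy for the ``only if'' direction --- is the standard one and does match how the cited source organizes the material.

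As a proof, however, the sketch leans on assertions at exactly the two places where the content lives. In the sufficiency direction, the claim that ``a direct computation shows that the sequence $\{c_j(E)\}$ defined by \eqref{eq:cjdef} terminates this recursion at order $n$ for any $J \in \cJ(E)$'' is essentially the whole implication, not a routine check: it requires the explicit representation of the diagonal (and off-diagonal) Green's functions of a reflectionless finite-gap Jacobi matrix in terms of the Dirichlet eigenvalues $\mu_j(n)$ and $Q_E^{1/2}$, and the matching of their Laurent expansions at infinity against the recursion. The convolution step for general $Q_{m-n}$ is fine once that is in place. In the necessity direction, your plan to first obtain $L^2=-R(J)$ with $Q_E \mid R$ and then separately argue that $R/Q_E$ is a perfect square is the hard way around, and your proposed mechanism is not sufficient as stated: sign considerations ($L$ antisymmetric, $R \le 0$ on $E$, $R \ge 0$ off $E$) only give that $R/Q_E$ is nonnegative on $\bbR$, and a nonnegative real polynomial need not be a square (it is in general only a sum of two squares, because of possible nonreal zeros), so ``no simple zeros of $R/Q_E$'' does not close the argument. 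The clean route is to use the full strength of Burchnall--Chaundy: for a reflectionless $n$-gap $J$ the commutant is the coordinate ring of the curve $y^2=Q_E(z)$, i.e.\ every commuting operator is $f(J)+h(J)L_E$ with $L_E$ the canonical degree-$(n+1)$ antisymmetric operator satisfying $L_E^2=Q_E(J)$ (whose existence is the sufficiency direction with $Q_{m-n}=1$); antisymmetry of $L$ kills $f$, so $L=h(J)L_E$ and $R=h^2Q_E$ with $Q_{m-n}=h$, and the identification $c_j=c_j(\tilde E)$ then follows from the asymptotics as you describe. Establishing that description of the commutant is where the Baker--Akhiezer machinery you anticipate actually enters, so your instinct there is right, but it should replace, not supplement, the branch-point heuristic. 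Finally, note a small mismatch with the paper's conventions: as literally defined in Section 2, $A^{\pm}$ are half-line compressions, which would make $P(J)^{+}-P(J)^{-}$ symmetric; the antisymmetry you use (correctly, and as in \cite{TES00}) corresponds to the upper/lower triangular truncations.
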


\begin{prop}[Proposition \ref{pr:rem1}, \textit{(3)}]
\label{pr:tes1res}
If $J$ is stationary for $P$, then the spectrum of $J$ has at most $\deg(P) - 1$ (open) spectral gaps.  Furthermore, for each $E \in \cK^n$, there exists a polynomial $P$ of degree $n+1$ so that every $J \in \cJ(E)$ is stationary for $P$.
\end{prop}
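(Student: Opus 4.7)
The plan is to derive both assertions directly from Proposition \ref{pr:tes1} (Corollary 12.10 of \cite{TES00}), combined with the fact, recorded in Proposition \ref{pr:rem1}(2), that stationary Jacobi operators are automatically finite-gap and reflectionless. The core observation is that Proposition \ref{pr:tes1} is a biconditional, so we can run it in both directions: the ``only if'' direction yields the gap bound, while the ``if'' direction, applied with the trivial choice $Q_0 \equiv 1$, produces the desired polynomial.

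For the first claim, I would begin with a $J$ stationary for some polynomial $P$ of degree $m+1$. By Proposition \ref{pr:rem1}(2), $\sigma(J) = E \in \cK^n$ for some finite $n$, and $J \in \cJ(E)$. This places us squarely in the setting of Proposition \ref{pr:tes1}, which then guarantees the existence of a polynomial $Q_{m-n}$ of degree $m - n$. Since polynomials of negative degree do not exist, we must have $m - n \geq 0$, i.e., $n \leq m = \deg(P) - 1$, which is the desired bound on the number of open spectral gaps.

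For the second claim, I would fix $E \in \cK^n$ and produce a single polynomial $P$ of degree $n+1$ that simultaneously witnesses stationarity for every $J \in \cJ(E)$. The strategy is to apply Proposition \ref{pr:tes1} with the minimal choice $m = n$: the only monic polynomial of degree zero is the constant $1$, which forces $\tilde{E} = E$ in the factorization $Q_{\tilde{E}} = Q_{m-n}^2 Q_E$. The coefficients $c_j(E)$ appearing in the expansion \eqref{eq:cjdef} of $Q_E^{1/2}$ are then determined entirely by $E$, and Proposition \ref{pr:tes1} certifies that every $J \in \cJ(E)$ is stationary for
\[
P(z) = z^{n+1} - c_1(E)\, z^n - \cdots - c_n(E)\, z.
\]
Because the coefficients of $P$ depend only on the spectrum $E$, this single polynomial works uniformly across the entire isospectral torus.

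Neither step presents a genuine obstacle, since Proposition \ref{pr:tes1} is doing all the analytic heavy lifting; the content of our proposition is really just the correct bookkeeping on degrees and the recognition that the degenerate choice $Q_0 = 1$ minimizes $\deg(P)$. The only substantive sanity check is confirming that the polynomial $P$ constructed in part (2) is genuinely independent of the particular $J \in \cJ(E)$, which is transparent from the fact that its coefficients $c_j(E)$ are defined purely from the asymptotic expansion of $Q_E^{1/2}$ and make no reference to the Jacobi parameters of $J$.
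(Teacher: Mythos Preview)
Your proposal is correct and follows essentially the same approach as the paper's proof: both directions are read off from Proposition~\ref{pr:tes1}, using Proposition~\ref{pr:rem1}(2) to land in the finite-gap reflectionless setting for the first claim, and taking $Q_{m-n}=1$ with $m=n$ for the second. The only cosmetic difference is that the paper phrases the degree inequality as ``$Q_E$ divides $Q_{\tilde E}$, so $2(n+1)\le 2(m+1)$,'' whereas you phrase it as ``$Q_{m-n}$ has degree $m-n$, so $m-n\ge 0$''; these are the same observation.
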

\begin{proof}
Let $E = \sigma(J)$, and suppose $J$ is stationary for $P$ of degree $m+1$.  Since $J$ is stationary, $J \in \cJ(E)$ with $E$ finite-gap by Proposition \ref{pr:rem1} \textit{(2)}.  Suppose $E$ has $n$ gaps.  Then, by Proposition \ref{pr:tes1}, we have
\begin{align*}
P(z) = z^{m+1} - c_1z^m - \cdots - c_m z,
\end{align*}  
where $c_j = c_j(\tilde{E})$ are determined as in \eqref{eq:cjdef} for some polynomial $Q_{\tilde{E}}(z)$ of degree $2(m+1)$.  Furthermore, we have that $Q_E(z)$, a polynomial of degree $2(n+1)$, is a factor of $Q_{\tilde{E}}(z)$.  But for this to be true, it must be the case that $2(m+1) \geq 2(n+1)$; that is, that $m = \deg(P)-1 \geq n$.

Now, suppose $E \in \cK^n$, and define a sequence of coefficients $c_j = c_j(E)$ as in \eqref{eq:cjdef}.  Then taking $Q_{m-n}(z) = 1$ in Proposition \ref{pr:tes1} shows that $J$ is stationary for $P$.
\end{proof}

While we have introduced all of these results, we feel it necessary, while not particularly relevant to the matter at hand, to note a simple consequence that we have not observed in the literature:
\begin{thm}
Let $E \in \cK^n$ for some $n \geq 0$, and suppose $J, J' \in \cJ(E)$.  Then $J$ and $J'$ are unitary equivalent.
\end{thm}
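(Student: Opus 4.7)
The plan is to exhibit explicit unitary equivalences via the Toda hierarchy. By Proposition \ref{pr:todauniteq}, for every polynomial $P$ and every $t \in \bbR$ the operator $\cT_P(t)J$ is unitarily equivalent to $J$, so it suffices to show that for any $J, J' \in \cJ(E)$ one may write
\begin{align*}
J' = \cT_{P_N}(t_N) \circ \cdots \circ \cT_{P_1}(t_1) J
\end{align*}
for some polynomials $P_1,\ldots,P_N$ and times $t_1,\ldots,t_N \in \bbR$.

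Since a finite-gap compact is trivially homogeneous, Theorem \ref{thm:vy} furnishes a homeomorphism $\Phi : \cJ(E) \to (\bbR/\bbZ)^n$ which simultaneously linearizes each Toda flow $\cT_P(t)$ as translation by $t\xi_P$ for some $\xi_P \in \bbR^n$. Consequently, the orbit of any $J$ under the collective action of all Toda flows corresponds under $\Phi$ to the coset $\Phi(J) + V \pmod{\bbZ^n}$, where $V \subset \bbR^n$ is the real linear span of $\{\xi_P : P \text{ a polynomial}\}$. To complete the proof, it suffices to establish $V = \bbR^n$, whence the orbit equals $(\bbR/\bbZ)^n \cong \cJ(E)$ and any target $J'$ is reached by a finite composition of Toda flows.

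For the spanning property, I would consider the $n$ Toda flows generated by polynomials of degrees $1, 2, \ldots, n$. The linear independence of the resulting direction vectors $\xi_{P_1}, \ldots, \xi_{P_n}$ is a classical consequence of the complete integrability of the Toda hierarchy: these commuting flows correspond, via the Baker-Akhiezer representation, to $n$ independent translation vectors on the Jacobian of the hyperelliptic curve $y^2 = Q_E(z)$, which is precisely the $n$-torus $\cJ(E)$. Alternatively, one could verify linear independence by a direct Jacobian computation using the trace formulas, in the spirit of the analyticity argument of Proposition \ref{pr:omcts}.

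The main obstacle is a clean verification of the spanning claim; while essentially folklore in the integrable systems community, a rigorous proof requires Toda hierarchy machinery beyond what is developed explicitly in this paper. Given the spanning, the conclusion is immediate: write $\Phi(J') - \Phi(J) \equiv \sum_{k=1}^n t_k \xi_{P_k} \pmod{\bbZ^n}$; then the operator $\cT_{P_n}(t_n) \circ \cdots \circ \cT_{P_1}(t_1) J$ is simultaneously unitarily equivalent to $J$ (by iterated application of Proposition \ref{pr:todauniteq}) and equal to $J'$ (by Theorem \ref{thm:vy}), as desired.
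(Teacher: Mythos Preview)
Your overall strategy matches the paper's exactly: linearize the Toda flows on $(\bbR/\bbZ)^n$ via Theorem \ref{thm:vy}, invoke Proposition \ref{pr:todauniteq} for unitary equivalence along each flow, and then argue that the direction vectors $\xi_{P_1},\ldots,\xi_{P_n}$ for $P_j(z)=z^j$ span $\bbR^n$. The only substantive gap is precisely the step you flag as an obstacle --- you defer the spanning claim to folklore (Baker--Akhiezer, Jacobian computations), and explicitly concede that a rigorous proof ``requires Toda hierarchy machinery beyond what is developed explicitly in this paper.''

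In fact the paper already contains the needed tool, and the argument is a two-line contradiction. Suppose the $\xi_{P_j}$ are linearly dependent; then some nontrivial linear combination $P=\sum_{j=1}^n c_j z^j$ has $\xi_P=0$, so every $J\in\cJ(E)$ is stationary for $P$. But Proposition \ref{pr:rem1}\textit{(3)} (restated and proved as Proposition \ref{pr:tes1res}) says that if $J$ is stationary for $P$ then $\sigma(J)$ has at most $\deg(P)-1\le n-1$ gaps, contradicting $E\in\cK^n$. This closes the spanning claim without any appeal to hyperelliptic curves or trace formulas. You had all the ingredients in hand; the missing idea was to turn ``$\xi_P=0$'' into ``$J$ is stationary for a polynomial of too-low degree'' and then invoke the gap bound already established in the paper.
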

\begin{proof}
By Theorem \ref{thm:vy}, Toda flows for various Toda polynomials are simultaneously linearized by our conjugacy.  Thus, by Proposition \ref{pr:todauniteq}, it suffices to find $n$ linearly independent Toda flows on $\cJ(E)$.  We claim that the Toda flows induced by $P_j(z) := z^j$, $1 \leq j \leq n$ have this property.

Suppose otherwise; then there exists a linear combination $P = \sum_{j=1}^n c_jP_j$ such that the vector $\xi_P$ (from Theorem \ref{thm:vy}) vanishes.  But if $\xi_P = 0$, then for any $J \in \cJ(E)$, $J$ is stationary for $P$.  But $\deg(P) \leq n$ and $E \in \cK^n$, contradicting Proposition \ref{pr:tes1res}.
\end{proof}


\def\cprime{$'$}

\end{document}